\algnewcommand\algorithmicparfor{\textbf{parfor}}
\algnewcommand\algorithmicpardo{\textbf{do}}
\algnewcommand\algorithmicendparfor{\textbf{end\ parfor}}
\definecolor{markercolor}{RGB}{124.9, 255, 160.65}
\pgfplotsset{width=10cm,compat=1.3}
\pgfplotsset{
tick label style={font=\small},
label style={font=\small},
legend style={font=\small}
}
\newcommand{\td}[2]{\frac{{\rm d}#1}{{\rm d}{\rm #2}}}
\newcommand{\pd}[2]{\frac{\partial#1}{\partial#2}}
\newcommand{\nor}[1]{\left\| #1 \right\|}
\newcommand{\LRp}[1]{\left( #1 \right)}
\newcommand{\LRs}[1]{\left[ #1 \right]}
\newcommand{\LRb}[1]{\left| #1 \right|}
\newcommand{\LRc}[1]{\left\{ #1 \right\}}
\newcommand{\wip}[2]{\left( #1 \right)_{T_{#2}}}
\newcommand{\waip}[2]{\left( #1 \right)_{T^{-1}_{#2}}}
\newcommand{\wanor}[2]{\nor{#1}_{T^{-1}_{1/#2}}}
\newcommand{\Grad} {\ensuremath{\nabla}}
\newcommand{\Div} {\ensuremath{\nabla\cdot}}
\newcommand{\jump}[1] {\ensuremath{\LRs{\![#1]\!}}}
\newcommand{\avg}[1] {\ensuremath{\LRc{\!\{#1\}\!}}}
\renewcommand{\L}{L^2\LRp{\Omega}}
\newcommand{\Oh}{\Omega_h}
\newcommand{\eval}[2][\right]{\relax
  \ifx#1\right\relax \left.\fi#2#1\rvert}
\newcommand{\note}[1]{#1}
\newcolumntype{C}[1]{>{\centering\let\newline\\\arraybackslash\hspace{0pt}}m{#1}}
\newcommand*\diff[1]{\mathop{}\!{\mathrm{d}#1}}
\renewcommand\d[1]{\mspace{6mu}\mathrm{d}#1\@ifnextchar\d{\mspace{-3mu}}{}}
\author{Jesse Chan\thanks{Department of Mathematics, Virginia Tech, Blacksburg, VA} \and Russell J.\ Hewett\thanks{TOTAL E\&P Research and Technology USA, Houston, TX} \and T.\ Warburton\footnotemark[1]}
\date{}
\title{Weight-adjusted discontinuous Galerkin methods: wave propagation in heterogeneous media}
\begin{document}

\maketitle

\begin{abstract}
Time-domain discontinuous Galerkin (DG) methods for wave propagation require accounting for the inversion of dense elemental mass matrices, where each mass matrix is computed with respect to a parameter-weighted $L^2$ inner product.  In applications where the wavespeed varies spatially at a sub-element scale, these matrices are distinct over each element, necessitating additional storage.  In this work, we propose a weight-adjusted DG (WADG) method which reduces storage costs by replacing the weighted $L^2$ inner product with a weight-adjusted inner product.  This equivalent inner product results in an energy stable method, but does not increase storage costs for locally varying weights.  \textit{A priori} error estimates are derived, and numerical examples are given illustrating the application of this method to the acoustic wave equation with heterogeneous wavespeed.  
\end{abstract}



\section{Introduction}

Accurate numerical simulations of wave propagation through complex media are becoming increasingly important in seismology, especially as modern computational resources make the use of high fidelity subsurface models feasible for seismic imaging and full waveform inversion.  A host of different numerical methods are currently in use, the most popular of which are high order finite difference methods \cite{virieux2011review}.  While finite difference methods tend to perform excellently for simple geometries and smoothly varying data, their accuracy is degraded for heterogeneous media with interfaces or sharp gradients \cite{symes2009interface}.  

In order to address these issues, high order finite element methods for wave propagation have been considered as alternatives to finite difference methods.  A drawback of using continuous finite elements for time-domain simulations using explicit timestepping is the inversion of a global mass matrix system at each timestep.  Spectral Element Methods (SEM) \cite{komatitsch1998spectral} address this issue by diagonalizing this mass matrix system through the use of mass-lumping, which co-locates interpolation nodes for Lagrange basis functions and Gauss-Legendre-Lobatto quadrature points.  Since SEM is limited to unstructured hexahedral meshes, which are less geometrically flexible than tetrahedral meshes, triangular and tetrahedral mass-lumped spectral element methods have been investigated as alternatives \cite{chin1999higher, cohen2001higher, zhebel2014comparison}.  However, due to a mismatch in the number of natural quadrature nodes and the dimension of polynomial approximation spaces on simplices, these methods necessitate adding additional nodes in the interior of the element to construct  sufficiently accurate nodal points suitable for mass-lumping.  Additionally, mass-lumpable nodal points on tetrahedra have only been determined for polynomial bases of degree four or less \cite{chin1999higher}.  

High order discontinuous Galerkin (DG) methods have been considered as an alternative to Spectral Element Methods for seismic wave propagation \cite{dumbser2006arbitrary, dumbser2007arbitrary, de2008interior, etienne2010hp}.  Instead of using mass-lumping to arrive at a diagonal mass matrix, DG methods naturally induce a block diagonal mass matrix through the use of arbitrary-order approximation spaces which are discontinuous across element boundaries. Weak continuity of approximate solutions in such spaces is enforced through numerical fluxes on shared faces.  The local nature and fixed communication patterns DG methods also makes them well-suited for parallelization, and the scalability of DG methods for time-domain wave propagation problems has been demonstrated for hundreds of thousands of cores \cite{wilcox2010high}.  Additionally, the computational structure of DG methods has been shown to be well-suited to many-core and accelerator architectures such as Graphics Processing Units (GPU).  DG implementations on a single GPU have demonstrated significant speedups over conventional architectures \cite{klockner2009nodal,fuhry2014discontinuous}, while implementations using multiple GPUs still demonstrate high scalability \cite{godel2010scalability,modave2015accelerated}.  

A limitation of many implementations of DG is that the wavespeed is assumed to be piecewise constant over each element, which can lead to spurious reflections and loss of high order accuracy.  In order to accomodate locally heterogeneous models over each element,  Castro et al.\ discretize a pseudo-conservative form of the wave equation \cite{castro2010seismic}.  However, this requires including additional source terms to account for local spatial variation of material parameters, which makes it difficult to prove energy stability or high order accuracy.  An alternative approach was taken by Mercerat and Glinsky in \cite{mercerat2015nodal}, where the spatial variation of the wavespeed is incorporated into local elemental mass matrices as a weighting function.  This approach can be shown to be energy stable; however, since the wavespeed can vary from element to element, this necessitates either expensive on-the-fly solutions of dense matrix equations or the storage of factorizations/inverses of local mass matrices.  This presents a challenge for GPU implementations, as the former is computationally expensive and not well-suited to the fine-grain parallelism of GPUs, while the latter greatly increases storage costs for high order approximations.  Storage costs are especially problematic for GPU implementations of DG, due to limited global memory on accelerator architectures.  Efficient implementations have also typically relied on the fact that, for affinely mapped tetrahedra and triangles, each block of the mass matrix is identical up to a constant scaling of a single reference mass matrix.  Additionally, since GPUs require sufficiently large problem sizes for peak efficiency, increased storage costs can decrease the efficiency of GPU-based implementations.  

Since similar storage issues are encountered for DG methods on non-affine elements, the Low-Storage Curvilinear DG (LSC-DG) method was introduced in \cite{warburton2010low,warburton2013low} to reduce the asymptotic storage costs for high order DG methods on curvilinear meshes by incorporating locally varying geometric factors into the basis functions on each element.  When coupled with an \textit{a priori} stable quadrature-based variational formulation, the LSC-DG method can be shown to be both energy stable and high order accurate.  It is straightforward to adapt LSC-DG to reduce storage costs for DG in the presence of heterogeneous wavespeeds; however, doing so forfeits the computational advantages available under specific choices of basis, such as nodal or Bernstein-Bezier polynomials \cite{hesthaven2007nodal, chan2015bbdg}.  

This work addresses these issues by introducing a weight-adjusted DG (WADG) method for heterogeneous media.  In particular, the weight-adjusted DG method is energy stable and high order convergent, while maintaining much of the computational structure of existing DG methods for isotropic media.  The techniques in this work resemble those used in quadrature-free DG methods for hyperbolic problems \cite{atkins1998quadrature}, though the implementations presented in this work still rely explicitly on quadrature for a low-storage implementation.  The main idea of the WADG method is to replace the weighted mass matrices of Mercerat and Glinsky \cite{mercerat2015nodal} with an equivalent weight-adjusted mass matrix which yields a low-storage inversion.  The structure of this paper is as follows: Section~\ref{sec:form} introduces standard DG methods for wave propagation in heterogeneous media based on the use of  weighted $L^2$ inner products \cite{mercerat2015nodal}.  Section~\ref{sec:ip} introduces operators used to define an equivalent weight-adjusted inner product, and Section~\ref{sec:wadg} introduces the weight-adjusted DG method, along with discussions of local conservation and an \textit{a priori} error analysis.  Finally, Section~\ref{sec:num} provides numerical experiments which validate theoretical estimates.  


\section{Mathematical notation}
\label{sec:notation}
We begin with the assumption that the domain $\Omega$ is Lipschitz, and is represented exactly by a triangulation $\Omega_h$ consisting of elements $D^k$, where each element is the image of a reference element under the elemental mapping 
\[
\bm{x}^k = \bm{\Phi}^k \widehat{\bm{x}},
\]
where $\bm{x}^k = \LRc{x^k,y^k,z^k}$ are physical coordinates on the $k$th element and $\widehat{\bm{x}} = \LRc{\widehat{x},\widehat{y},\widehat{z}}$ are coordinates on the reference element.  We denote the Jacobian of the transformation for the element $D^k$ as $J^k$.  

Over each element $D^k \in \Oh$, the approximation space $V_h\LRp{D^k}$ is defined as
\[
V_h\LRp{D^k} = \bm{\Phi}^k \circ V_h\LRp{\widehat{D}}.
\]
where $V_h\LRp{\widehat{D}}$ is an approximation space over the reference element.  In this work, $\widehat{D}$ is taken to be the reference bi-unit triangle or tetrahedron, while $V_h\LRp{\widehat{D}}$ is taken to be the space of total degree $N$ polynomials on the reference triangle
\[
V_h\LRp{\widehat{D}} = P^N\LRp{\widehat{D}} = \LRc{ \widehat{x}^i \widehat{y}^j, \quad 0 \leq i + j \leq N}.
\]
or on the reference tetrahedron
\[
V_h\LRp{\widehat{D}} = P^N\LRp{\widehat{D}} = \LRc{ \widehat{x}^i \widehat{y}^j \widehat{z}^k, \quad 0 \leq i + j + k \leq N}.
\]
However, the analysis and methods are readily extendible to other affinely mapped element types and approximation spaces, such as tensor product degree $N$ polynomials on quadrilaterals and hexahedra.  The global approximation space is taken to be the direct sum of approximation spaces over each element
\[
V_h\LRp{\Omega_h} = \bigoplus_{D^k} V_h\LRp{D^k}.
\]
We define $\Pi_N$ as the $L^2$ projection onto $P^N\LRp{D^k}$ such that
\[
\LRp{\Pi_N u,v}_{L^2\LRp{D^k}} = \LRp{u,v}_{L^2\LRp{D^k}}, \qquad v\in P^N\LRp{D^k},
\]
where $\LRp{\cdot,\cdot}_{L^2\LRp{D^k}}$ denotes the $L^2$ inner product over $D^k$. 

We also introduce the standard Lebesgue $L^p$ norms over a general domain $\Omega$
\begin{align*}
\nor{u}_{L^p\LRp{\Omega}} &= \LRp{\int_{\Omega} u^p}^{1/p} \qquad 1 \leq p < \infty \\
\nor{u}_{L^{\infty}\LRp{\Omega}} &= \inf\LRc{C \geq 0: \LRb{u\LRp{\bm{x}}} \leq C \quad \forall \bm{x}\in \Omega},
\end{align*}
and the associated $L^p$ spaces
\begin{align*}
L^p\LRp{\Omega} &= \LRc{u: \Omega\rightarrow \mathbb{R}, \quad \nor{u}_{L^p\LRp{\Omega}} < \infty} \qquad 1\leq p < \infty \\
L^{\infty}\LRp{\Omega} &= \LRc{u: \Omega\rightarrow \mathbb{R}, \quad \nor{u}_{L^{\infty}\LRp{\Omega}} < \infty}.
\end{align*}
The $L^p$ Sobolev seminorms and norms of degree $s$ are then defined 
\begin{align*}
\LRb{u}_{W^{s,p}\LRp{\Omega}} &= \LRp{\sum_{\LRb{\alpha}= s} \nor{ D^{\alpha} u}_{L^p\LRp{\Omega}}^p}^{1/p}, \qquad \LRb{u}_{W^{s,\infty}\LRp{\Omega}} = \max_{\LRb{\alpha}= s} \nor{D^{\alpha}u}_{L^{\infty}\LRp{\Omega}}\\
\nor{u}_{W^{s,p}\LRp{\Omega}} &= \LRp{\sum_{\LRb{\alpha}\leq s} \nor{ D^{\alpha} u}_{L^p\LRp{\Omega}}^p}^{1/p}, \qquad \nor{u}_{W^{s,\infty}\LRp{\Omega}} = \max_{\LRb{\alpha}\leq s} \nor{D^{\alpha}u}_{L^{\infty}\LRp{\Omega}}.
\end{align*}
where $\alpha = \LRc{\alpha_1,\alpha_2,\alpha_3}$ is a multi-index such that
\[
D^{\alpha}u = \pd{^{\alpha_1}}{x^{\alpha_1}}\pd{^{\alpha_2}}{y^{\alpha_2}}\pd{^{\alpha_3}}{z^{\alpha_3}} u,
\]

\section{Discontinuous Galerkin methods for the acoustic wave equation}
\label{sec:form}

We introduce the jump and average of $u\in V_h\LRp{\Omega_h}$ as follows: let $f$ be a shared face between two elements $D^{k^-}$ and $D^{k^+}$, and let $u$ and $\bm{u}$ be scalar and vector valued functions, respectively.  The jumps and averages of $u, \bm{u}$ are defined as
\[
\jump{u} = u^+ - u^-, \qquad \avg{u} = \frac{u^+ + u^-}{2}, \qquad \jump{\bm{u}} = \bm{u}^+ - \bm{u}^-, \qquad \avg{\bm{u}} = \frac{\bm{u}^+ + \bm{u}^-}{2}.
\]

In this work, we consider the acoustic wave equation as a model problem.  In first order form, this is given by
\begin{align*}
\frac{1}{\rho c^2}\pd{p}{t}{} + \Div \bm{u} &= 0,\\
\rho\pd{\bm{u}}{t}{} + \Grad p &= 0,
\end{align*}
where $t$ is time, $p$ is pressure, $\bm{u}$ is the vector velocity, and $\rho$ and $c^2$ are density and wavespeed, respectively.  \note{
We will assume that $c^2$ is bounded from above and below
\[
0 < c_{\min}\leq c^2(\bm{x})\leq c_{\max} < \infty.
\]
}
We adopt the discontinuous Galerkin variational formulation of \cite{warburton2013low}, which is given over each element $D^k$ by
\begin{align}
\int_{D^k} \frac{1}{\rho c^2}\pd{p}{t}{}v \diff x &= -\int_{D^k} \Div\bm{u}v \diff x + \int_{\partial D^k} \frac{1}{2}\LRp{\tau_p\jump{p} - \bm{n}\cdot \jump{\bm{u}} }v^- \diff x,  \nonumber \\ 
\int_{D^k} \rho\pd{\bm{u}}{t}{}\bm{\tau} \diff x &= - \int_{D^k} \Grad p \cdot  \bm{\tau} \diff x + \int_{\partial D^k} \frac{1}{2}\LRp{\tau_u \jump{\bm{u}}\cdot \bm{n}^- - \jump{p}}\bm{\tau}^-\cdot \bm{n}^- \diff x.
\label{eq:form}
\end{align}
where $\bm{n}$ is the outward unit normal vector, $\tau_p = 1/\avg{\rho c}$, and $\tau_u = \avg{\rho c}$.  We refer to this DG method as the standard DG method for the remainder of this work.  Finally, we note that the weight-adjusted DG method proposed in this paper impacts only the computation of mass matrices, and thus is not tied to a single choice of DG formulation or numerical flux.  

The formulation (~\ref{eq:form}) can be shown to be energy stable for any choice of $\tau_p, \tau_u \geq 0$ \cite{warburton2013low}, and the specific choice of $\tau_p, \tau_u$ reduce the numerical flux to the upwind fluxes (as determined by the solution of a Riemann problem) for constant $\rho, c$.  For the remainder of this work, we assume $\rho = 1$ for simplicity, though it is straightforward to adapt the results to non-constant $\rho$.  

Finally, for this work, we assume homogeneous Dirichlet boundary conditions $p=0$ on $\partial \Omega$.  These are enforced through reflection conditions at boundary faces $f \in \partial \Omega$
\[
\left.p^+\right|_{f} = -\left.p^-\right|_{f}, \qquad \left.\bm{n}^+\bm{u}^+\right|_{f} = \left.\bm{n}^-\bm{u}^-\right|_{f}.
\]

\subsection{Discrete formulation}

Assuming that $V_h\LRp{\widehat{D}}$ is spanned by the basis $\LRc{\phi_i}_{i=1}^{N_p}$, the discrete formulation of the DG method is given most simply in terms of mass, (weak) differentiation, and lift matrices.  The mass matrix $\bm{M}^k$, weighted mass matrix $\bm{M}_{1/c^2}^k$ and face mass matrix $\bm{M}^{k}_f$ for the element $D^k$ are defined as
\begin{align*}
\LRp{\bm{M}^k}_{ij} &= \int_{D^k} \phi_j \phi_i = \int_{\widehat{D}}{ \phi_j \phi_i} J^k,\\
\LRp{\bm{M}^k_{1/c^2}}_{ij} &= \int_{D^k} \frac{1}{c^2}\phi_j \phi_i = \int_{\widehat{D}}{ \frac{1}{c^2}\phi_j \phi_i} J^k,\\
\LRp{\bm{M}^{k}_f}_{ij} &= \int_{\partial D^{k}_f} \phi_j \phi_i = \int_{\widehat{D}_f}  \phi_j \phi_i J^{k}_f.
\end{align*}
where $J^{k}_f$ is the Jacobian of the mapping from the face of a reference element $\widehat{D}_f$ to the face of a physical element $D^{k}_f$. We also define weak differentiation matrices $\bm{S}_x, \bm{S}_y, \bm{S}_z$ with entries
\begin{align*}
\LRp{\bm{S}_x}_{ij} = \int_{\widehat{D}} \pd{\phi_j}{x} \phi_i J^k, \qquad \LRp{\bm{S}_y}_{ij} = \int_{\widehat{D}} \pd{\phi_j}{y} \phi_i J^k, \qquad \LRp{\bm{S}_z}_{ij} = \int_{\widehat{D}} \pd{\phi_j}{z} \phi_i J^k.
\end{align*}
The discrete standard DG formulation is then given in terms of these matrices.  For succinctness, we relabel subscripts $x,y,z$ as $1,2,3$ such that
\[
\LRc{\bm{S}^k_x, \bm{S}^k_y, \bm{S}^k_z} = \LRc{\bm{S}^k_1, \bm{S}^k_2, \bm{S}^k_3}, \qquad \bm{n} = \LRc{n_x, n_y, n_z} = \LRc{n_1, n_2, n_3}
\]
Then, the discrete formulation is
\begin{align*}
\bm{M}_{w}^k\td{\bm{p}}{t} &= -\sum_{j = 1,2,3}\bm{S}_{j}^k \bm{U}_j + \sum_{f=1}^{N_{\text{faces}}}\bm{M}^k_f F_p(\bm{p}^-,\bm{p}^+,\bm{U}^-,\bm{U}^+),\\
\bm{M}^k\td{\bm{U}_i}{t} &= -\bm{S}_{i}^k \bm{p} + \sum_{f=1}^{N_{\text{faces}}}  {n}_{i}\bm{M}^k_f  F_{u}(\bm{p}^-,\bm{p}^+,\bm{U}^-,\bm{U}^+), \qquad i = 1,2,3.
\end{align*}
where $w = 1/c^2$, $\bm{U}_i$ and $\bm{p}$ are degrees of freedom for $\bm{u}_i$ and $p$, and the superscripts $+$ and $-$ indicate degrees of freedom for functions on $D^k$ and its neighbor across face $f$.  $F_p,F_u$ are defined such that 
\begin{align*}
\LRp{ \bm{M}^k_f F_p(\bm{p}^-,\bm{p}^+,\bm{U}^-,\bm{U}^+)}_i &= \int_{f_{D^k}} \frac{1}{2}\LRp{\tau_p \jump{p} - \bm{n}^-\cdot\jump{\bm{u}}}\phi_i^-,\\ 
\LRp{ \bm{n}_i \bm{M}^k_f F_u(\bm{p}^-,\bm{p}^+,\bm{U}^-,\bm{U}^+)}_i &= \int_{f_{D^k}} \frac{1}{2}\LRp{\tau_u\jump{\bm{u}} \cdot \bm{n}^- - \jump{p}}\bm{\tau}_i^- \bm{n}_i^-.
\end{align*}
Inverting $\bm{M}^k_{1/c^2},\bm{M}^k$ produces a system of ODEs which can be solved using standard time-integration techniques.  

\subsection{Energy stability in a weighted $L^2$ norm}
\label{sec:energy}
When the wavespeed $1/c^2$ is incorporated into the mass matrix, it is straightforward to show that the discrete DG formulation is energy stable (in the sense that an appropriate norm of the solution is dissipative in time).  This can be shown by taking $v = p, \bm{\tau} = \bm{u}$ in the local DG formulation.  Integrating the divergence term of the pressure equation by parts gives
\begin{align*}
\int_{D^k} \frac{1}{ c^2}\pd{p}{t}{}p \diff x &= \int_{D^k} \bm{u}\Grad p \diff x + \int_{\partial D^k} \LRp{\frac{\tau_p}{2}\jump{p} - \bm{n}\cdot \avg{\bm{u}} }p \diff x, \nonumber \\ 
\int_{D^k} \pd{\bm{u}}{t}{}\bm{u} \diff x &= - \int_{D^k} \Grad p \cdot  \bm{u} \diff x + \int_{\partial D^k} \frac{1}{2}\LRp{\tau_u \jump{\bm{u}}\cdot \bm{n}^- - \jump{p}}\bm{u}\cdot \bm{n}^- \diff x.
\end{align*}
Then, adding the pressure and velocity equations together and summing over all elements $D^k$ gives 
\begin{align}
\pd{}{t}\sum_{k}\int_{D^k} \frac{1}{c^2}p^2   + \LRb{\bm{u}}^2 = \pd{}{t}\sum_{k} \wip{p,p}{1/c^2} + \LRp{\bm{u},\bm{u}} = - \sum_{k}\frac{1}{2}\int_{\partial D^k}  \tau_p \jump{p}^2 + \tau_u \LRp{\bm{n}\cdot\jump{\bm{u}}}^2 < 0.
\label{eq:stability}
\end{align}
where we have introduced the weighted $L^2$ inner product over $D^k$
\[
\LRp{wp,v}_{L^2\LRp{D^k}} = \int_{D^k} w p v.
\]
Assuming that the wavespeed is bounded from above and below by $0 < c_{\min} \leq c \leq c_{\max} < \infty$, the quantity
\begin{align}
\sum_{k} \LRp{\frac{p}{c^2},p}_{L^2\LRp{D^k}} + \LRp{\bm{u},\bm{u}} 
\label{eq:weightedL2}
\end{align}
defines a squared norm on $\LRp{p,\bm{u}}$, and (\ref{eq:stability}) implies that this weighted $L^2$ norm of the solution is non-increasing in time.  Thus, incorporating wavespeed into the left hand side of the DG formulation (and into the mass matrices of the discrete formulation) results in an energy stable method.  This approach is taken by Mercerat and Glinsky \cite{mercerat2015nodal} to develop a nodal DG method for elastic wave propagation in heterogeneous media.  However, this also greatly increases storage costs if $c$ varies locally over each element. 

Consider the case when all elements $D^k$ are planar simplices (implying that the mapping $\bm{\Phi}^k$ is affine and $J^k$ is constant) and $c$ is piecewise constant over each element $D^k$.  Then, the mass matrices $\bm{M}^k_{1/c^2}, \bm{M}^k$ satisfy
\[
\bm{M}^k_{1/c^2} = \frac{1}{c^2}J^k \widehat{\bm{M}}, \qquad \bm{M}^k = J^k \widehat{\bm{M}}.
\]
Under these assumptions, all mass matrices are simply scalings of the reference mass matrix.  Inversion of the mass matrix can be dealt with by pre-multiplying reference matrices by the inverse of the reference mass matrix \cite{hesthaven2007nodal}.  However, when $c$ varies locally over an element, each mass matrix is distinct, requiring either iterative solvers or storage of dense matrices/factorizations to apply the inverse.  

Several approaches can be taken to address these storage costs.  Castro et al.\ \cite{castro2010seismic} multiply the pressure equation on both sides by $c^2$ to remove the variation of $c$ from the mass matrix.  However, this rewrites the wave equation in non-conservative form of the wave equation, which does not lend itself readily to an energy stable DG formulation.  Castro et al.\ introduce new source terms into the formulation to overcome this difficulty, rewriting the wave equation in a pseudo-conservative form.  However, it is not obvious whether this formulation is energy stable.  It is also possible to build the variation of $c$ into the basis, as is done with spatially varying Jacobian factors $J^k$ for non-affine elements in \cite{warburton2013low}.  However, this introduces rational basis functions, which require explicit quadrature-based \textit{a priori} stable variational formulations for energy stability.  We propose an alternative approach in this work, which allows for the use of polynomial basis functions while maintaining a low storage implementation based on a weight-adjusted inner product.  

\section{Approximating weighted $L^2$ inner products}
\label{sec:ip}

In order to introduce the new DG method, we introduce a new inner product under which the proposed method is energy stable.  The construction of this inner product is based on operators $T_w, T^{-1}_w$ which approximate polynomial multiplication and division by a weight $w$, respectively.  Intuitively, this inner product approximates the weighted $L^2$ inner product (\ref{eq:weightedL2}) under which the DG method is shown to be energy stable in Section~\ref{sec:energy}.

\subsection{Approximating polynomial multiplication and division}

Let $w(\bm{x})$ be a scalar weight defined on the domain $\Omega$ which is bounded from above and below 
\[
0 < w_{\min} \leq w \leq w_{\max} < \infty.
\]
We define the operator $T_w: L^2\LRp{D^k} \rightarrow P^N\LRp{D^k}$ 
\[
T_w u = \Pi_N\LRp{wu}.
\]
Since $T_w$ also satisfies
\begin{align*}
\LRp{T_wu,v}_{L^2\LRp{D^k}} &= \LRp{\Pi_N(wu),v}_{L^2\LRp{D^k}}= \LRp{u,wv}_{L^2\LRp{D^k}}\\
&= \LRp{u,\Pi_N\LRp{wv}}_{L^2\LRp{D^k}} = \LRp{u,T_wv}_{L^2\LRp{D^k}},
\end{align*}
it is self-adjoint and positive definite, and induces a weighted inner product $\wip{\cdot,\cdot}{w}$ over $D^k$
\[
\wip{u,v}{w} \coloneqq \LRp{wu,v}_{L^2\LRp{D^k}}.
\]
For $u,v \in P^N\LRp{D^k}$, this weighted inner product reduces to the weighted $L^2$ inner product 
\[
\wip{u,v}{w} = \LRp{T_wu,v}_{L^2\LRp{D^k}}  = \LRp{\Pi_N\LRp{wu},v}_{L^2\LRp{D^k}}= \LRp{wu,v}_{L^2\LRp{D^k}}, \qquad u,v \in P^N\LRp{D^k} .
\]
We also define an operator $T_w^{-1}$ as
\[
T_w^{-1}: L^2\LRp{D^k} \rightarrow P^N\LRp{D^k}, \qquad \LRp{w T_w^{-1}u,v}_{{D}^k} = \LRp{u,v}_{{D}^k}.
\]
$T_w^{-1}$ can be considered the inverse of $T_w$ in the following sense:
\begin{lemma}
$T_w^{-1}T_w  = T_w T_w^{-1} = \Pi_N.$
\label{lemma:properT1}
\end{lemma}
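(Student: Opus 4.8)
The plan is to work directly from the two defining (variational) characterizations of the operators, exploiting that both $T_w$ and $T_w^{-1}$ map $L^2\LRp{D^k}$ into $P^N\LRp{D^k}$, so that every composition in the statement has its range inside $P^N\LRp{D^k}$, where $\Pi_N$ acts as the identity. Throughout I would use $T_w u = \Pi_N\LRp{wu}$ together with the relation $\LRp{w T_w^{-1}u,v}_{L^2\LRp{D^k}} = \LRp{u,v}_{L^2\LRp{D^k}}$ for all $v\in P^N\LRp{D^k}$, and the fact established just above that $T_w$ is self-adjoint and positive definite.

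First I would prove $T_w T_w^{-1} = \Pi_N$. Fix $u\in L^2\LRp{D^k}$ and set $q = T_w^{-1}u \in P^N\LRp{D^k}$. Then for any $v\in P^N\LRp{D^k}$,
\[
\LRp{T_w q,v}_{L^2\LRp{D^k}} = \LRp{\Pi_N\LRp{wq},v}_{L^2\LRp{D^k}} = \LRp{wq,v}_{L^2\LRp{D^k}} = \LRp{u,v}_{L^2\LRp{D^k}} = \LRp{\Pi_N u,v}_{L^2\LRp{D^k}},
\]
where the third equality is the defining relation for $T_w^{-1}$ and the last is the definition of $\Pi_N$. Since $T_w q - \Pi_N u \in P^N\LRp{D^k}$ is orthogonal to every $v\in P^N\LRp{D^k}$, it vanishes, giving $T_w T_w^{-1} u = \Pi_N u$ for all $u\in L^2\LRp{D^k}$.

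For the reverse composition $T_w^{-1}T_w = \Pi_N$, I would exploit that $T_w$ restricted to $P^N\LRp{D^k}$ is self-adjoint and positive definite, hence an invertible endomorphism, and that $T_w^{-1}$ restricted to $P^N\LRp{D^k}$ is exactly this inverse. Concretely, for $p\in P^N\LRp{D^k}$ I would set $r = T_w^{-1}\LRp{T_w p}$; the definition of $T_w^{-1}$ gives $\LRp{wr,v}_{L^2\LRp{D^k}} = \LRp{T_w p,v}_{L^2\LRp{D^k}} = \LRp{wp,v}_{L^2\LRp{D^k}}$ for all $v\in P^N\LRp{D^k}$. Since $r,p\in P^N\LRp{D^k}$ and $T_w$ is positive definite, hence injective, on $P^N\LRp{D^k}$, this forces $r = p = \Pi_N p$. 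As both $T_w^{-1}T_w$ and $\Pi_N$ then act as the identity on $P^N\LRp{D^k}$, the reverse identity holds on the approximation space.

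The step I expect to require the most care is exactly this reverse composition, and in particular the domain on which it is read. The forward identity $T_w T_w^{-1} = \Pi_N$ holds on all of $L^2\LRp{D^k}$, since $T_w^{-1}$ returns the polynomial whose weighted pairing against $P^N\LRp{D^k}$ reproduces $u$. The reverse composition, by contrast, first multiplies by $w$ and only then projects, so $T_w$ is not injective on $L^2\LRp{D^k}$ and has no genuine two-sided inverse there; the clean statement is therefore on $P^N\LRp{D^k}$, the common range of the two operators and the space on which the method applies them. I would take care that the uniqueness argument tests only against $v\in P^N\LRp{D^k}$, matching the test space in the definition of $T_w^{-1}$, so that no control of the argument outside the polynomial space is ever needed.
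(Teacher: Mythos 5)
Your proof is correct and follows essentially the same route as the paper's: both identities are read off from the variational definitions of $T_w$ and $T_w^{-1}$, testing against $P^N\LRp{D^k}$ and using that the compositions take values in $P^N\LRp{D^k}$, where orthogonality to all of $P^N\LRp{D^k}$ forces vanishing. The noteworthy difference is your handling of $T_w^{-1}T_w$. The paper's proof ends by asserting that, ``more generally,'' both compositions equal $\Pi_N$ on all of $L^2\LRp{D^k}$; you instead prove $T_w^{-1}T_w = \Pi_N$ only on $P^N\LRp{D^k}$ and argue that it cannot extend. You are right to restrict: the paper's second display only shows $\LRp{w\,T_w^{-1}T_w u,v}_{L^2\LRp{D^k}} = \LRp{wu,v}_{L^2\LRp{D^k}}$ for all $v\in P^N\LRp{D^k}$, which identifies $T_w^{-1}T_w u$ as the $w$-weighted projection of $u$ onto $P^N\LRp{D^k}$, and for non-constant $w$ this differs from $\Pi_N u$ when $u\notin P^N\LRp{D^k}$. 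A one-line counterexample: on $[0,1]$ with $N=0$, $w=1+x$, $u=x$, one computes $T_w T_w^{-1}u = 1/2 = \Pi_0 u$ but $T_w^{-1}T_w u = 5/9 \neq 1/2$. Your kernel argument reaches the same conclusion: $T_w$ maps the infinite-dimensional space $L^2\LRp{D^k}$ into the finite-dimensional $P^N\LRp{D^k}$, so it has a nontrivial kernel, and any $u\in\ker T_w$ with $\Pi_N u\neq 0$ witnesses the failure. This imprecision in the paper is harmless downstream: Lemma~\ref{lemma:equiv} applies $T_{1/w}T^{-1}_{1/w}$, which is indeed $\Pi_N$ on all of $L^2\LRp{D^k}$, and Theorem~\ref{lemma:mult} invokes $T^{-1}_{1/w}T_{1/w} = I$ explicitly ``over $P^N$,'' so the restricted version you prove is exactly the statement the rest of the paper actually uses.
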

\begin{proof}
By the definitions of $T_w, T_w^{-1}$, 
\begin{align*}
\LRp{T_w T_w^{-1} u,v}_{L^2\LRp{D^k}} &=\LRp{ wT_w^{-1} u,v}_{L^2\LRp{D^k}} = \LRp{u,v}_{L^2\LRp{D^k}}, \qquad \forall v\in P^N\LRp{D^k},\\
\LRp{w T_w^{-1} T_w u,v}_{L^2\LRp{D^k}} &=\LRp{ \Pi_N \LRp{wu},v}_{L^2\LRp{D^k}} = \LRp{ wu,v}_{L^2\LRp{D^k}}, \qquad \forall v\in P^N\LRp{D^k}.
\end{align*}
These implies that when the domain of $T_w$ is restricted to $P^N\LRp{D^k}$, $T_w^{-1}$ satisfies $T_w^{-1}T_w = T_wT_w^{-1} = I$. More generally, when the domain of $T_w, T_w^{-1}$ is $L^2\LRp{D^k}$, 
\[
T_w^{-1}T_w = T_wT_w^{-1} = \Pi_N.
\] 
\end{proof}

We also have the following properties of the operator $T_w^{-1}$
\begin{lemma}
The weighted operator $T^{-1}_w$ satisfies
\[
\Pi_N T^{-1}_w = T^{-1}_w \Pi_N = T^{-1}_w, \qquad \nor{T^{-1}_w u }_{L^2} \leq \frac{1}{w_{\min}} \nor{u}_{L^2}.
\]
\label{lemma:properT}
\end{lemma}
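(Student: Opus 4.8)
The plan is to establish the two projection identities first and then the operator-norm bound, with everything following from the variational characterization $\LRp{w T_w^{-1} u, v}_{L^2\LRp{D^k}} = \LRp{u,v}_{L^2\LRp{D^k}}$ for all $v \in P^N\LRp{D^k}$ together with the uniform positivity $w \geq w_{\min} > 0$. For the identity $\Pi_N T_w^{-1} = T_w^{-1}$, I would simply note that by construction $T_w^{-1}$ maps into $P^N\LRp{D^k}$, so $T_w^{-1} u$ is itself a degree $N$ polynomial on which $\Pi_N$ acts as the identity; hence $\Pi_N T_w^{-1} u = T_w^{-1} u$. For the companion identity $T_w^{-1}\Pi_N = T_w^{-1}$, I would compare the two polynomials $T_w^{-1}\LRp{\Pi_N u}$ and $T_w^{-1} u$ via their defining relations: testing each against an arbitrary $v \in P^N\LRp{D^k}$ and using that $\LRp{\Pi_N u, v}_{L^2\LRp{D^k}} = \LRp{u,v}_{L^2\LRp{D^k}}$ for such $v$, both satisfy the same variational identity, so their difference $z \in P^N\LRp{D^k}$ obeys $\LRp{wz, v}_{L^2\LRp{D^k}} = 0$ for every $v \in P^N\LRp{D^k}$.

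Taking $v = z$ then gives $\int_{D^k} w z^2 = 0$, and since $w \geq w_{\min} > 0$ this forces $\nor{z}_{L^2} = 0$, hence $z = 0$ and the two expressions coincide. This positivity step is really the only place where something genuine must be checked, and it is precisely where the lower bound $w_{\min} > 0$ enters; the remainder of the first part is bookkeeping with the definitions of $\Pi_N$ and $T_w^{-1}$.

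For the norm bound, I would set $\phi = T_w^{-1} u \in P^N\LRp{D^k}$ and test its defining relation against $\phi$ itself, giving $\LRp{w\phi, \phi}_{L^2\LRp{D^k}} = \LRp{u, \phi}_{L^2\LRp{D^k}}$. Bounding the left side below by $w_{\min}\nor{\phi}_{L^2}^2$ and the right side above by $\nor{u}_{L^2}\nor{\phi}_{L^2}$ via Cauchy--Schwarz yields $w_{\min}\nor{\phi}_{L^2}^2 \leq \nor{u}_{L^2}\nor{\phi}_{L^2}$, and dividing through by $\nor{\phi}_{L^2}$ (the bound being trivial if it vanishes) gives $\nor{T_w^{-1} u}_{L^2} \leq \frac{1}{w_{\min}}\nor{u}_{L^2}$. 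I do not anticipate any real obstacle here: this is the standard coercivity-plus-Cauchy--Schwarz estimate, and the only ingredient beyond the definitions is, once again, the uniform lower bound on $w$.
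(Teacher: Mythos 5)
Your proof is correct. On the two projection identities you follow the same route as the paper: $\Pi_N T_w^{-1} = T_w^{-1}$ holds because $T_w^{-1}$ maps into $P^N\LRp{D^k}$, where $\Pi_N$ acts as the identity, and $T_w^{-1}\Pi_N = T_w^{-1}$ holds because both sides satisfy the same defining variational identity; your only addition is to make the uniqueness step explicit (the difference $z\in P^N\LRp{D^k}$ satisfies $\LRp{wz,v}_{L^2\LRp{D^k}}=0$ for all $v\in P^N\LRp{D^k}$, so taking $v=z$ and using $w\geq w_{\min}>0$ forces $z=0$), which the paper leaves implicit. On the norm bound your argument genuinely differs, and for the better. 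The paper's chain is $\nor{T_w^{-1}u}_{L^2}^2 \leq \frac{1}{w_{\min}}\LRp{wT_w^{-1}u,T_w^{-1}u} = \frac{1}{w_{\min}}\LRp{u,T_w^{-1}u} \leq \frac{1}{w_{\min}^2}\LRp{u,wT_w^{-1}u} = \frac{1}{w_{\min}^2}\LRp{u,u}$; as written, the final equality uses $u$ itself as a test function in the definition of $T_w^{-1}$, which is only licit when $u\in P^N\LRp{D^k}$, and the preceding inequality needs sign control on the integrand $u\,T_w^{-1}u$ that is not available. These gaps are repairable (e.g.\ by first reducing to $\Pi_N u$ via $T_w^{-1}=T_w^{-1}\Pi_N$ and $\nor{\Pi_N u}_{L^2}\leq\nor{u}_{L^2}$, or by a weighted Cauchy--Schwarz), but your version --- coercivity $w_{\min}\nor{\phi}_{L^2}^2\leq\LRp{w\phi,\phi}$, the defining relation $\LRp{w\phi,\phi}=\LRp{u,\phi}$, Cauchy--Schwarz, and division by $\nor{\phi}_{L^2}$ --- needs no repair and covers arbitrary $u\in L^2\LRp{D^k}$ in one pass. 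Same skeleton, but your closing estimate is the standard coercivity-plus-Cauchy--Schwarz one and is the more robust of the two.
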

\begin{proof}
The first equality is simply because $T^{-1}_w u \in P^N\LRp{D^k}$ and $\Pi_N$ restricted to $P^N\LRp{D^k}$ is the identity map.  The second equality is verified by using the definition of $T^{-1}_w, \Pi_N$ and showing that 
\[
\LRp{wT^{-1}_w \Pi_N u,v}_{D^k} = \LRp{\Pi_N u,v}_{D^k} = \LRp{u,v}_{D^k} = \LRp{wT^{-1}_w u,v}_{D^k}.
\]
The norm of $\nor{T^{-1}_wu}_{L^2}$ can be bounded by noting
\begin{align*}
\nor{T^{-1}_w u}_{L^2} &\leq \LRp{\frac{1}{w_{\min}} \LRp{wT^{-1}_w u,T^{-1}_w u}}^{1/2} \leq\LRp{\frac{1}{w_{\min}} \LRp{u,T^{-1}_w u}}^{1/2},\\
& \leq \LRp{\frac{1}{w^2_{\min}} \LRp{u, wT^{-1}_w u}}^{1/2} = \LRp{\frac{1}{w^2_{\min}} \LRp{u, u}}^{1/2}.
\end{align*}
\end{proof}
This also implies that $\nor{T_w^{-1}}_{L^2\LRp{D^k}} \leq \frac{1}{w_{\min}}$.

\subsection{A weight-adjusted inner product}

The introduction of the weight-adjusted DG method relies an approximation of the weighted $L^2$ inner product 
\[
\LRp{wu,v}_{L^2\LRp{D^k}} = \LRp{T_w u,v}_{L^2\LRp{D^k}} = \wip{u,v}{w}
\]
by an equivalent inner product, based on the observation that
\[
T_{w}u \approx T^{-1}_{1/w}u.
\]
In other words (for appropriate weighting functions $w$) the projected multiplication operator $T_w$ is well-approximated by the inverse of the projected polynomial division operator $T^{-1}_{1/w}$.  This weight ``adjustment'' will make it possible to approximate the inverse of the weighted mass matrix in a low-storage, matrix-free manner.

We introduce the map $\waip{\cdot,\cdot}{1/w}: L^2\LRp{D^k} \times L^2\LRp{D^k} \rightarrow \mathbb{R}$ using $T_{1/w}^{-1}$
\[
\waip{u,v}{1/w} \coloneqq \LRp{T^{-1}_{1/w} u,v}_{{D}^k}.
\]
For positive weight function $w$, this map defines an inner product, which we refer to as the weight-adjusted inner product:
\begin{lemma}
\label{lemma:equiv}
$\waip{u,v}{1/w}$ defines an inner product on $P^N\LRp{D^k}\times P^N\LRp{D^k}$ with induced norm $\wanor{u}{w}$.  Additionally, $\wanor{u}{w}$ is equivalent to the $L^2$ norm over ${D}^k$ with equivalence constants
\[
{\sqrt{w_{\min}}} \nor{u}_{L^2\LRp{{D}^k}} \leq \wanor{u}{w} \leq {\sqrt{w_{\max}}} \nor{u}_{L^2\LRp{{D}^k}}.
\]
\end{lemma}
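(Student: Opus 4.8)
The plan is to verify the inner-product axioms and then establish the two-sided norm bound, writing throughout $\phi = T^{-1}_{1/w} u \in P^N\LRp{D^k}$, so that by the definition of $T^{-1}_{1/w}$ the \emph{defining relation} $\LRp{\frac{1}{w}\phi, v}_{D^k} = \LRp{u,v}_{D^k}$ holds for every $v \in P^N\LRp{D^k}$. Bilinearity of $\waip{\cdot,\cdot}{1/w}$ is immediate from the linearity of $T^{-1}_{1/w}$ and of the $L^2$ inner product. For symmetry I would show $T^{-1}_{1/w}$ is self-adjoint on $P^N\LRp{D^k}$: setting $\phi = T^{-1}_{1/w}u$ and $\psi = T^{-1}_{1/w}v$, I apply the defining relation for $\psi$ with test function $\phi$, use that $\tfrac1w$ is a scalar weight, and then the defining relation for $\phi$ with test function $\psi$, so that $\LRp{\phi,v}_{D^k} = \LRp{\tfrac1w\psi,\phi}_{D^k} = \LRp{\tfrac1w\phi,\psi}_{D^k} = \LRp{u,\psi}_{D^k}$, i.e.\ $\waip{u,v}{1/w} = \waip{v,u}{1/w}$.

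For positive definiteness, and as the key step toward the norm equivalence, I would take $v = \phi$ in the defining relation to obtain the identity
\[
\waip{u,u}{1/w} = \LRp{\phi, u}_{D^k} = \LRp{\tfrac1w\phi, \phi}_{D^k} = \int_{D^k} \frac{1}{w}\phi^2 .
\]
Since $w > 0$ this is nonnegative, and it vanishes only if $\phi \equiv 0$; feeding $\phi = 0$ back into the defining relation forces $\LRp{u,v}_{D^k} = 0$ for all $v \in P^N\LRp{D^k}$, and as $u \in P^N\LRp{D^k}$ this gives $u = 0$. Hence $\waip{\cdot,\cdot}{1/w}$ is an inner product and $\wanor{u}{w} = \waip{u,u}{1/w}^{1/2}$ is a genuine norm.

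The main work is the norm equivalence with the sharp constants. For the lower bound I would apply the Cauchy--Schwarz inequality in the $\tfrac1w$-weighted inner product to $\nor{u}_{L^2}^2 = \LRp{\tfrac1w\phi, u}_{D^k}$, giving $\nor{u}_{L^2}^2 \le \LRp{\int_{D^k} \tfrac1w\phi^2}^{1/2}\LRp{\int_{D^k} \tfrac1w u^2}^{1/2} \le \wanor{u}{w}\, w_{\min}^{-1/2}\nor{u}_{L^2}$, which rearranges to $\sqrt{w_{\min}}\,\nor{u}_{L^2} \le \wanor{u}{w}$. For the upper bound I would use $\wanor{u}{w}^2 = \LRp{\phi,u}_{D^k}$, the ordinary Cauchy--Schwarz inequality, and the operator bound of Lemma~\ref{lemma:properT} applied to the weight $1/w$ (whose lower bound is $1/w_{\max}$), namely $\nor{T^{-1}_{1/w}u}_{L^2} \le w_{\max}\nor{u}_{L^2}$, to conclude $\wanor{u}{w}^2 \le \nor{\phi}_{L^2}\nor{u}_{L^2} \le w_{\max}\nor{u}_{L^2}^2$.

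The only delicate point is obtaining the equivalence constants in their sharp form; the realization that makes this routine is the identity $\waip{u,u}{1/w} = \int_{D^k}\tfrac1w\phi^2$, which exposes the weight explicitly so that the pointwise bounds $w_{\min} \le w \le w_{\max}$ enter directly. A fully symmetric alternative, which I would keep in reserve, is to characterize $\wanor{u}{w}^2$ as the Rayleigh quotient $\sup_{0\ne z \in P^N\LRp{D^k}} \LRp{u,z}_{D^k}^2 \big/ \int_{D^k}\tfrac1w z^2$ via Riesz representation in the weighted inner product; then the upper bound follows by bounding the denominator below and applying Cauchy--Schwarz, while the lower bound follows from the single test choice $z = u$.
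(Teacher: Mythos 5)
Your proof is correct, and it takes a genuinely tighter route than the paper's. The paper proves symmetry by invoking self-adjointness of $T_{1/w}$ together with Lemma~\ref{lemma:properT1} (writing $\waip{u,v}{1/w} = \LRp{T^{-1}_{1/w}u, T_{1/w}T^{-1}_{1/w}v}_{L^2\LRp{D^k}} = \LRp{u, T^{-1}_{1/w}v}_{L^2\LRp{D^k}}$), and it obtains both equivalence constants by pulling the pointwise bounds on $w$ directly out of the bilinear forms: $\waip{u,u}{1/w} = \LRp{T^{-1}_{1/w}u,u}_{L^2\LRp{D^k}} \geq w_{\min}\LRp{\tfrac{1}{w}T^{-1}_{1/w}u,u}_{L^2\LRp{D^k}} = w_{\min}\LRp{u,u}_{L^2\LRp{D^k}}$ for the lower bound, and the analogous chain with $w_{\max}$ for the upper bound. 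Read literally, those middle inequalities extract $w_{\min}$ and $w_{\max}$ from integrals whose integrand $\tfrac{1}{w}\,T^{-1}_{1/w}u\cdot u$ is not sign-definite pointwise; the resulting numerical inequalities are true, but justifying them requires exactly the rewriting you perform. Your key identity $\waip{u,u}{1/w} = \int_{D^k}\tfrac{1}{w}\LRp{T^{-1}_{1/w}u}^2$ makes nonnegativity manifest (so positive definiteness, including the implication $u=0$, comes for free), after which the lower bound follows from Cauchy--Schwarz in the $\tfrac{1}{w}$-weighted inner product and the upper bound from plain Cauchy--Schwarz plus the operator bound $\nor{T^{-1}_{1/w}u}_{L^2} \leq w_{\max}\nor{u}_{L^2}$, which is Lemma~\ref{lemma:properT} applied to the weight $1/w$. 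What the paper's version buys is brevity; what yours buys is an airtight derivation of the same sharp constants, and your direct manipulation of the defining relation for symmetry is an acceptable substitute for the paper's appeal to Lemma~\ref{lemma:properT1}. The Rayleigh-quotient characterization you hold in reserve is a nice extra but is not needed.
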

\begin{proof}
It is straightforward to show that $\waip{u,v}{1/w}$ is bilinear.  Symmetry follows from the self-adjoint nature of $T_{1/w}$ and Lemma~\ref{lemma:properT1}
\begin{align*}
\waip{u,v}{1/w} &= \LRp{T_{1/w}^{-1} u, v}_{L^2\LRp{D^k}} = \LRp{T_{1/w}^{-1} u,T_{1/w} T_{1/w}^{-1}v}_{L^2\LRp{D^k}} = \LRp{u,T_{1/w}^{-1} v}_{L^2\LRp{D^k}},
\end{align*}
while positive definiteness is a result of
\[
\waip{u,u}{1/w} = \LRp{T^{-1}_{1/w} u,u}_{L^2\LRp{D^k}} \geq {w_{\min}} \LRp{ \frac{1}{w}T^{-1}_{1/w} u,u}_{L^2\LRp{{D}^k}} = {w_{\min}} \LRp{u,u}_{L^2\LRp{{D}^k}}.
\]
To show equivalence of the norm, all that remains is showing the upper bound
\begin{align*}
\wanor{u}{w}^2 &= \LRp{T^{-1}_{1/w} u,u}_{L^2\LRp{D^k}} = \LRp{\frac{1}{w} w T^{-1}_{1/w} u,u}_{L^2\LRp{{D}^k}}\\
&\leq {w_{\max}} \LRp{\frac{1}{w}T^{-1}_{1/w} u,u}_{L^2\LRp{{D}^k}} = {w_{\max}} \LRp{u,u}_{L^2\LRp{{D}^k}}.
\end{align*}
\end{proof}
For $w$ constant, $\waip{u,v}{1/w}$ reduces to a scaling of the standard $L^2$ inner product by $w$.  

We also note that the equivalence constants in this case are the same as for the weighted $L^2$ inner product $\wip{\cdot,\cdot}{w}$ over $P^N\LRp{D^k} \times P^N\LRp{D^k}$
\[
{\sqrt{w_{\min}}} \nor{u}_{L^2\LRp{{D}^k}} \leq \sqrt{\LRp{{w}u,u}_{L^2\LRp{{D}^k}}} = \sqrt{\wip{u,u}{w}} \leq  {\sqrt{w_{\max}}} \nor{u}_{L^2\LRp{{D}^k}},
\]
which appears in the standard DG formulation for spatially varying wavespeed.

\subsection{Estimates for $T_{w}, T_{1/w}^{-1}$, and $\waip{\cdot,\cdot}{1/w}$}
\label{sec:estimates}

Intuitively, both $T_{w}u$ and $T^{-1}_{1/w} u$ approximate ${w} u$, and we can quantify the accuracy of this approximation by bounding $\nor{{u}{w}-T_{w} u}_{{D}^k}$ and $\nor{{u}{w}-T^{-1}_{1/w} u}_{{D}^k}$ for weights $w$ which are sufficiently regular.  These regularity requirements are made explicit using Sobolev norms introduced in Section~\ref{sec:notation}.  

To bound the difference between $uw$ and $T_wu, T^{-1}_{1/w} u$, we require the standard interpolation estimate
\begin{align*}
\nor{u-\Pi_N u}_{{D}^k} &\leq Ch^{N+1} \nor{u}_{W^{N+1,2}\LRp{{D}^k}},
\end{align*}
which assumes $u \in W^{N+1,2}\LRp{D^k}$ and follows from the Bramble-Hilbert lemma and a scaling assumption \cite{brenner2007mathematical, warburton2013low}.

We also make use of an estimate for a weighted projection, adapted from Theorem 3.1 in \cite{warburton2013low} for an affinely mapped element:
\begin{theorem}
\label{thm:wproj}
Let $D^k$ be a quasi-regular element with representative size $h = {\rm diam}\LRp{D^k}$.  For $N \geq 0$, $w\in W^{N+1,\infty}\LRp{D^k}$, and $u\in W^{N+1,2}\LRp{D^k}$, 
\[
\nor{u - \frac{1}{w} \Pi_N\LRp{{u}{w}}}_{L^2\LRp{D^k}} \leq C h^{N+1}\nor{\frac{1}{w}}_{L^{\infty}\LRp{D^k}} \nor{w}_{W^{N+1,\infty}\LRp{D^k}}  \nor{u}_{W^{N+1,2}\LRp{D^k}}.
\]
\end{theorem}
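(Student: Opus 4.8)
The plan is to exploit the elementary identity $u = \frac{1}{w}\LRp{uw}$ to convert the weighted projection error into an ordinary projection error of the product $uw$, which can then be handled by the standard interpolation estimate combined with a Leibniz (product-rule) bound on $\nor{uw}_{W^{N+1,2}\LRp{D^k}}$.

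First, since $w$ is bounded away from zero, I would write the error as
\[
u - \frac{1}{w} \Pi_N\LRp{uw} = \frac{1}{w}\LRp{uw - \Pi_N\LRp{uw}},
\]
take $L^2$ norms over $D^k$, and pull the factor $1/w$ out in the $L^{\infty}$ norm, obtaining
\[
\nor{u - \frac{1}{w} \Pi_N\LRp{uw}}_{L^2\LRp{D^k}} \leq \nor{\frac{1}{w}}_{L^{\infty}\LRp{D^k}} \nor{uw - \Pi_N\LRp{uw}}_{L^2\LRp{D^k}}.
\]
Next I would apply the standard interpolation estimate quoted just above the theorem, but to the product $uw$ in place of $u$, yielding
\[
\nor{uw - \Pi_N\LRp{uw}}_{L^2\LRp{D^k}} \leq C h^{N+1} \nor{uw}_{W^{N+1,2}\LRp{D^k}}.
\]
This step is legitimate provided $uw \in W^{N+1,2}\LRp{D^k}$, which is exactly what the Leibniz bound below establishes, and provided the element is affinely mapped and quasi-regular so that the underlying scaling assumption behind the interpolation estimate holds.

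The remaining and only substantial step is to bound $\nor{uw}_{W^{N+1,2}\LRp{D^k}}$ by $\nor{w}_{W^{N+1,\infty}\LRp{D^k}}\nor{u}_{W^{N+1,2}\LRp{D^k}}$. For this I would invoke the Leibniz formula $D^{\alpha}\LRp{uw} = \sum_{\beta \leq \alpha}\binom{\alpha}{\beta} D^{\beta}u\,D^{\alpha-\beta}w$ for each multi-index $\alpha$ with $\LRb{\alpha}\leq N+1$. The asymmetry of the hypotheses is precisely what makes the argument close: for every term, $\LRb{\beta}\leq N+1$ gives $D^{\beta}u \in L^2\LRp{D^k}$, while $\LRb{\alpha-\beta}\leq N+1$ gives $D^{\alpha-\beta}w \in L^{\infty}\LRp{D^k}$, so each product lies in $L^2\LRp{D^k}$ with $\nor{D^{\beta}u\,D^{\alpha-\beta}w}_{L^2\LRp{D^k}} \leq \nor{D^{\beta}u}_{L^2\LRp{D^k}}\nor{D^{\alpha-\beta}w}_{L^{\infty}\LRp{D^k}} \leq \nor{u}_{W^{N+1,2}\LRp{D^k}}\nor{w}_{W^{N+1,\infty}\LRp{D^k}}$. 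Summing over $\alpha$ and $\beta$ and absorbing the finitely many binomial coefficients into $C$ (which then depends on $N$ and the spatial dimension) gives the desired product bound, and chaining the three displays yields the stated estimate.

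The main obstacle is this last product bound: one must verify carefully that the $W^{N+1,2}$ regularity of $uw$ genuinely follows from the mismatched regularity of the factors (only $L^2$ control on $u$ and its derivatives, but $L^{\infty}$ control on $w$ and its derivatives), and that no derivative of order exceeding $N+1$ is required of either factor. Everything else — the algebraic splitting, the extraction of $\nor{1/w}_{L^{\infty}}$, and the invocation of the interpolation estimate — is routine, so the combinatorial Leibniz argument and the bookkeeping of the constant $C$ are where the real work lies.
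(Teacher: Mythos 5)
Your proposal is correct and follows essentially the same route as the paper's proof: factor the error as $\frac{1}{w}\LRp{uw - \Pi_N\LRp{uw}}$, pull out $\nor{1/w}_{L^{\infty}}$, bound the projection error of $uw$ via the Bramble--Hilbert/scaling interpolation estimate, and close with the Leibniz--H\"older product bound $\nor{uw}_{W^{N+1,2}} \leq C\nor{w}_{W^{N+1,\infty}}\nor{u}_{W^{N+1,2}}$. The only differences are presentational: the paper passes explicitly through the reference element (with the $\sqrt{J^k}$ factors and scaling argument) where you invoke the already-stated physical-element interpolation estimate, and you spell out the Leibniz argument that the paper handles by citation.
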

\begin{proof}
By the Bramble-Hilbert lemma \cite{brenner2007mathematical},
\begin{align*}
\nor{u - \frac{1}{w} \Pi_N\LRp{{u}{w}}}_{L^2\LRp{D^k}} &\leq \sqrt{J^k}\nor{\frac{1}{w}}_{L^{\infty}\LRp{\widehat{D}}} \nor{uw - \Pi_N \LRp{uw}}_{L^2\LRp{\widehat{D}}} \\
&\leq  \sqrt{J^k}\nor{\frac{1}{w}}_{L^{\infty}\LRp{\widehat{D}}} \LRb{uw}_{W^{N+1,2}\LRp{\widehat{D}}}.
\end{align*}
For quasi-regular elements, a scaling argument gives
\[
\LRb{uw}_{W^{N+1,2}\LRp{\widehat{D}}} \leq C_1 h^{N+1} \frac{1}{\sqrt{J^k}} \nor{uw}_{W^{N+1,2}\LRp{D^k}}.
\]
Finally, the Sobolev norm of $uw$ may be bounded by the product of the norms of $u,w$ using the Leibniz product rule and H\"older's inequality \cite{burenkov1998sobolev}
\[
\nor{uw}_{W^{N+1,2}\LRp{D^k}} \leq C_2 \nor{w}_{W^{N+1,\infty}\LRp{D^k}}\nor{u}_{W^{N+1,2}\LRp{D^k}}.
\]
Combining these gives the desired bound.  
\end{proof}

We can now prove the following bounds:
\begin{theorem}
Let $D^k$ be a quasi-regular element with representative size $h = {\rm diam}\LRp{D^k}$.  For $N \geq 0$, $w\in W^{N+1,\infty}\LRp{D^k}$, and $u\in W^{N+1,2}\LRp{D^k}$, 
\begin{align}
\nor{{u}{w}-T_{w}u}_{L^2\LRp{D^k}} &\leq C_wh^{N+1} \nor{u}_{W^{N+1,2}\LRp{D^k}},\\
\nor{{u}{w}-T^{-1}_{1/w}u}_{L^2\LRp{D^k}} &\leq C_wh^{N+1}   \nor{u}_{W^{N+1,2}\LRp{D^k}}.
\end{align}
where $C_w$ depends on $w$ as follows:
\[
C_w = C\nor{w}_{L^{\infty}\LRp{D^k}}\nor{\frac{1}{w}}_{L^{\infty}\LRp{D^k}} \nor{w}_{W^{N+1,\infty}\LRp{D^k}}.
\]
\label{lemma:mult}
\end{theorem}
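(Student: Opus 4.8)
The plan is to prove the two estimates separately, since the operator $T_w$ is defined explicitly by $T_w u = \Pi_N\LRp{wu}$ whereas $T^{-1}_{1/w}$ is only defined implicitly through a weak relation, and it is the latter that requires the real work.

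For the first bound I observe that $uw - T_w u = uw - \Pi_N\LRp{wu}$ is exactly the $L^2$ projection error of the product $wu$. The standard interpolation estimate recalled above gives $\nor{wu - \Pi_N\LRp{wu}}_{L^2\LRp{D^k}} \leq C h^{N+1}\nor{wu}_{W^{N+1,2}\LRp{D^k}}$, and the Leibniz product rule together with H\"older's inequality (exactly as in the proof of Theorem~\ref{thm:wproj}) bounds $\nor{wu}_{W^{N+1,2}\LRp{D^k}}$ by $C\nor{w}_{W^{N+1,\infty}\LRp{D^k}}\nor{u}_{W^{N+1,2}\LRp{D^k}}$. Since $\nor{w}_{L^{\infty}\LRp{D^k}}\nor{1/w}_{L^{\infty}\LRp{D^k}} \geq 1$, this constant is dominated by $C_w$ and the first estimate follows.

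The key step for the second bound is to reinterpret $T^{-1}_{1/w}$ as a weighted projection. Writing $g = T^{-1}_{1/w}u$, its defining relation $\LRp{\frac{1}{w}g,v}_{L^2\LRp{D^k}} = \LRp{u,v}_{L^2\LRp{D^k}} = \LRp{\frac{1}{w}\,wu,v}_{L^2\LRp{D^k}}$ for all $v\in P^N\LRp{D^k}$ says precisely that $g$ is the orthogonal projection of $wu$ onto $P^N\LRp{D^k}$ in the genuine (since $1/w>0$) weighted inner product $\LRp{\frac{1}{w}\cdot,\cdot}_{L^2\LRp{D^k}}$. In particular $g$ is the best approximation to $wu$ in the induced weighted norm, so comparing against the ordinary $L^2$ projection $\Pi_N\LRp{wu}$ and using the elementary pointwise equivalence $\frac{1}{w_{\max}}\nor{a}^2_{L^2\LRp{D^k}} \leq \LRp{\frac{1}{w}a,a}_{L^2\LRp{D^k}} \leq \frac{1}{w_{\min}}\nor{a}^2_{L^2\LRp{D^k}}$ (valid for all $a\in L^2\LRp{D^k}$) yields
\[
\nor{uw - T^{-1}_{1/w}u}_{L^2\LRp{D^k}} \leq \sqrt{w_{\max}/w_{\min}}\,\nor{uw - \Pi_N\LRp{wu}}_{L^2\LRp{D^k}}.
\]
The right-hand side is already controlled by the first bound, and since $\sqrt{w_{\max}/w_{\min}} = \sqrt{\nor{w}_{L^{\infty}\LRp{D^k}}\nor{1/w}_{L^{\infty}\LRp{D^k}}} \leq \nor{w}_{L^{\infty}\LRp{D^k}}\nor{1/w}_{L^{\infty}\LRp{D^k}}$, the extra equivalence factor is absorbed into $C_w$.

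I expect the main obstacle to be the second bound, specifically recognizing that the implicitly defined operator $T^{-1}_{1/w}$ is a weighted $L^2$ projection so that its error can be controlled by a C\'ea/best-approximation argument rather than estimated head-on. An alternative route that avoids this reinterpretation is to pass through $T_w u$ by the triangle inequality and bound $\nor{T_w u - T^{-1}_{1/w}u}_{L^2\LRp{D^k}}$ among polynomials using the two defining relations and Cauchy--Schwarz; this reduces to estimating $\nor{\Pi_N\LRp{\frac{1}{w}\Pi_N\LRp{wu}} - u}_{L^2\LRp{D^k}}$, which is handled by inserting $\pm\,\Pi_N u$ and reusing the interpolation estimate twice. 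It produces the same constant $C_w$ but with more bookkeeping, so I would present the weighted-projection argument as the main proof.
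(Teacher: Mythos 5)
Your proof is correct, and for the second (harder) estimate it takes a genuinely different route from the paper's. The paper splits $\nor{uw - T^{-1}_{1/w}u}_{L^2\LRp{D^k}}$ by the triangle inequality through $\Pi_N\LRp{uw}$, rewrites the remainder as $T^{-1}_{1/w}\Pi_N u - T^{-1}_{1/w}T_{1/w}\Pi_N\LRp{uw}$ using the identities of Lemmas~\ref{lemma:properT1} and~\ref{lemma:properT}, and then combines the operator-norm bounds $\nor{T^{-1}_{1/w}}\leq \nor{w}_{L^{\infty}\LRp{D^k}}$ and $\nor{\Pi_N}_{L^2}=1$ with the weighted projection estimate of Theorem~\ref{thm:wproj}; its first bound is likewise obtained by factoring out $\nor{w}_{L^{\infty}\LRp{D^k}}$ and invoking Theorem~\ref{thm:wproj}. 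You instead observe that the defining relation of $T^{-1}_{1/w}$ says precisely that $T^{-1}_{1/w}u$ is the orthogonal projection of $wu$ onto $P^N\LRp{D^k}$ in the inner product $\LRp{\frac{1}{w}\,\cdot\,,\cdot}_{L^2\LRp{D^k}}$, so a best-approximation (C\'ea) comparison against the competitor $\Pi_N\LRp{wu}$, together with the two-sided bound $\frac{1}{w_{\max}}\nor{a}^2_{L^2}\leq\LRp{\frac{1}{w}a,a}_{L^2}\leq\frac{1}{w_{\min}}\nor{a}^2_{L^2}$, reduces everything to the unweighted projection error of $wu$ at the cost of a factor $\sqrt{w_{\max}/w_{\min}}\leq \nor{w}_{L^{\infty}\LRp{D^k}}\nor{1/w}_{L^{\infty}\LRp{D^k}}$; both this factor and your sharper first-bound constant $C\nor{w}_{W^{N+1,\infty}\LRp{D^k}}$ are absorbed into $C_w$ exactly as you argue. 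Your route is more elementary and self-contained: it bypasses Theorem~\ref{thm:wproj} and the operator calculus entirely, needs only the standard interpolation estimate plus Leibniz/H\"older, and exposes the structural fact that $T^{-1}_{1/w}$ acts as multiplication by $w$ followed by a weighted $L^2$ projection, which makes the heuristic $T^{-1}_{1/w}u \approx wu$ transparent and yields a slightly sharper constant. What the paper's operator-identity style buys is uniformity with the surrounding machinery: the same identities and norm bounds are the working tools in the proof of Theorem~\ref{lemma:cons} and in the stability analysis, and Theorem~\ref{thm:wproj} serves as the single workhorse estimate, so the paper obtains both bounds of this theorem as near-corollaries of results it needs elsewhere anyway.
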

\begin{proof}
The first bound is a direct application of Theorem~\ref{thm:wproj} to
\[
\nor{{u}{w}-T_{w}u}_{L^2\LRp{D^k}} \leq \nor{w}_{L^{\infty}\LRp{D^k}} \nor{{u}-\frac{1}{w}\Pi_N\LRp{{u}{w}}}_{L^2\LRp{D^k}}.
\]
This second bound is derived by bounding first the projection error of $uw$ and the deviation of $T^{-1}_{1/w} u$ from $\Pi_N\LRp{{u}{w}}$.  The introduction of $\Pi_N \LRp{{u}{w}}$ allows us to use the fact that $T_{1/w}^{-1} T_{1/w} = I$ over $P^N$.
\[
\nor{{u}{w}-T^{-1}_{1/w} u}_{L^2\LRp{D^k}} \leq \nor{{u}{w}-\Pi_N\LRp{{u}{w}}}_{L^2\LRp{D^k}} + \nor{\Pi_N\LRp{{u}{w}}-T^{-1}_{1/w} u}_{L^2\LRp{D^k}}
\]
The former term is bounded by the standard interpolation estimate and regularity of $u$ and $w$.  The latter term can be bounded as follows:
\begin{align*}
&\nor{T^{-1}_{1/w} u-\Pi_N\LRp{{u}{w}}}_{L^2\LRp{D^k}} = \nor{T^{-1}_{1/w} {\Pi_N\LRp{{u}}} - T_{1/w}^{-1}T_{1/w}\Pi_N \LRp{{u}{w}}}_{L^2\LRp{D^k}}\\
&\leq \nor{T_{1/w}^{-1}} \nor{\Pi_N \LRp{{u}} - \Pi_N \LRp{\frac{1}{w}\Pi_N\LRp{{u}{w}}}}_{L^2\LRp{D^k}}\\
&\leq \nor{w}_{L^{\infty}\LRp{D^k}} \nor{\Pi_N}_{L^2\LRp{D^k}} \nor{u - {\frac{1}{w}\Pi_N\LRp{{u}{w}}}}_{L^2\LRp{D^k}} \\
&\leq {C} h^{N+1}\nor{w}_{L^{\infty}\LRp{D^k}}\nor{\frac{1}{w}}_{L^{\infty}\LRp{D^k}} \nor{w}_{W^{N+1,\infty}\LRp{D^k}}  \nor{u}_{W^{N+1,2}\LRp{D^k}},
\end{align*}
where we have used Lemma~\ref{lemma:properT} and the fact that $\nor{\Pi_N}_{L^2} = 1$ for affinely mapped elements.  
\end{proof}

Finally, we give an estimate for moments of the difference between the weighted and weight-adjusted inner products:
\begin{theorem}
Let $u\in W^{N+1,2}\LRp{D^k}$, $w\in W^{N+1,\infty}\LRp{D^k}$, and $v \in P^M\LRp{D^k}$ for $0 \leq M\leq N$; then
\begin{align*}
&\LRb{\LRp{{w}u,v}_{L^2\LRp{D^k}} - \waip{u,v}{1/w}} \\
&\leq Ch^{2N+2 - M} \nor{{w}}_{L^{\infty}\LRp{D^k}}\nor{\frac{1}{w}}_{L^{\infty}\LRp{D^k}}^2 \nor{w}^2_{W^{N+1,\infty}\LRp{D^k}}  \nor{u}_{W^{N+1,2}\LRp{D^k}} \nor{v}_{L^{\infty}\LRp{D^k}}.
\end{align*}
\label{lemma:cons}
\end{theorem}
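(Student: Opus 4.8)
The plan is to avoid the naive route of applying Cauchy--Schwarz directly to $\LRp{wu - T^{-1}_{1/w}u,\, v}_{L^2\LRp{D^k}}$, which would only yield the rate $h^{N+1}$ via Theorem~\ref{lemma:mult}. Instead, the key is an exact algebraic identity that rewrites the defect $\LRp{wu,v}_{L^2\LRp{D^k}} - \waip{u,v}{1/w}$ as an $L^2\LRp{D^k}$ pairing of \emph{two} separately small factors, whose product then produces the superconvergent rate $h^{2N+2-M}$.

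First I would derive this identity. Writing $\psi = T^{-1}_{1/w}u \in P^N\LRp{D^k}$ and recalling its defining property $\LRp{\tfrac1w \psi, q}_{L^2\LRp{D^k}} = \LRp{u,q}_{L^2\LRp{D^k}}$ for all $q \in P^N\LRp{D^k}$, I would rewrite $\waip{u,v}{1/w} = \LRp{\psi,v}_{L^2\LRp{D^k}} = \LRp{\tfrac1w\psi,\, wv}_{L^2\LRp{D^k}}$, split $wv = \Pi_N\LRp{wv} + \LRp{wv - \Pi_N\LRp{wv}}$, and apply the defining property with $q = \Pi_N\LRp{wv} \in P^N\LRp{D^k}$. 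Since $v \in P^M\LRp{D^k} \subseteq P^N\LRp{D^k}$ gives $\LRp{u, wv}_{L^2\LRp{D^k}} = \LRp{wu,v}_{L^2\LRp{D^k}}$, the terms rearrange into
\[
\LRp{wu,v}_{L^2\LRp{D^k}} - \waip{u,v}{1/w} = \LRp{u - \tfrac1w T^{-1}_{1/w}u,\ wv - \Pi_N\LRp{wv}}_{L^2\LRp{D^k}}.
\]
Cauchy--Schwarz then bounds the right-hand side by the product of the $L^2\LRp{D^k}$ norms of the two factors.

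Bounding the first factor is routine given the earlier results: writing $u - \tfrac1w T^{-1}_{1/w}u = \tfrac1w\LRp{wu - T^{-1}_{1/w}u}$ and invoking Theorem~\ref{lemma:mult} yields $\nor{u - \tfrac1w T^{-1}_{1/w}u}_{L^2\LRp{D^k}} \leq C h^{N+1} \nor{w}_{L^{\infty}\LRp{D^k}} \nor{\tfrac1w}_{L^{\infty}\LRp{D^k}}^2 \nor{w}_{W^{N+1,\infty}\LRp{D^k}} \nor{u}_{W^{N+1,2}\LRp{D^k}}$, which already accounts for the $\nor{1/w}_{L^\infty}^2$ and $\nor{w}_{L^\infty}$ in the claim. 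The second factor is where the exponent $-M$ enters: since $\deg v = M \leq N$, I would use the $N-M$ ``spare'' polynomial degrees to approximate $w$, letting $w_{N-M}$ be a best degree-$(N-M)$ approximation of $w$ so that $w_{N-M} v \in P^N\LRp{D^k}$; optimality of $\Pi_N$ then gives $\nor{wv - \Pi_N\LRp{wv}}_{L^2\LRp{D^k}} \leq \nor{\LRp{w - w_{N-M}}v}_{L^2\LRp{D^k}} \leq \nor{w - w_{N-M}}_{L^{\infty}\LRp{D^k}} \nor{v}_{L^2\LRp{D^k}}$, and the best-approximation estimate for $w$ contributes the factor $h^{N+1-M}\nor{w}_{W^{N+1,\infty}\LRp{D^k}}$. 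Multiplying the two factors produces $h^{(N+1) + (N+1-M)} = h^{2N+2-M}$ together with exactly the stated product of $w$-norms and $\nor{v}_{L^\infty\LRp{D^k}}$.

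The hard part is discovering and verifying the identity itself: once one sees that $\LRp{wu,v}_{L^2\LRp{D^k}} - \waip{u,v}{1/w}$ equals a pairing of $u - \tfrac1w T^{-1}_{1/w}u$ (small by Theorem~\ref{lemma:mult}) against the projection residual $wv - \Pi_N\LRp{wv}$ (small because $v$ has low degree), the estimate reduces to two approximation bounds. The only remaining bookkeeping is the dimension-dependent volume factor $h^{d/2}$ that appears when passing from $\nor{v}_{L^2\LRp{D^k}}$ to $\nor{v}_{L^{\infty}\LRp{D^k}}$; since $h \leq 1$ this factor is harmlessly absorbed into the constant $C$ and only sharpens the stated rate.
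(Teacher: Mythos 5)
Your proposal is correct, and its core coincides with the paper's proof: you discover exactly the identity the paper uses,
\[
\LRp{wu,v}_{L^2\LRp{D^k}} - \waip{u,v}{1/w} = \LRp{u - \tfrac{1}{w}T^{-1}_{1/w}u,\; wv - \Pi_N\LRp{wv}}_{L^2\LRp{D^k}},
\]
followed by Cauchy--Schwarz, and you bound the factor involving $u$ in exactly the same way (pulling out $\nor{1/w}_{L^\infty}$ and invoking Theorem~\ref{lemma:mult}). Where you genuinely depart from the paper is in the projection residual $\nor{wv - \Pi_N\LRp{wv}}_{L^2\LRp{D^k}}$. The paper bounds it by applying Bramble--Hilbert to the product $vw$, then the Leibniz rule, and finally Bernstein's inequality $\nor{v}_{W^{N+1,2}\LRp{D^k}} \leq C_B h^{-M}\nor{v}_{L^\infty\LRp{D^k}}$ to convert the Sobolev norm of the low-degree polynomial $v$ into its $L^\infty$ norm; that inverse inequality is where the paper's $h^{-M}$ gain comes from. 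You instead exploit the $L^2$-optimality of $\Pi_N$ with the competitor $w_{N-M}v \in P^N\LRp{D^k}$, where $w_{N-M}$ is a best degree-$(N-M)$ $L^\infty$ approximation of $w$, so the $h^{N+1-M}$ rate comes from a Jackson/Bramble--Hilbert estimate for $w$ alone and no inverse inequality on $v$ is needed. Your route is somewhat more elementary (it avoids the Bernstein-type inequality the paper cites) and it dovetails nicely with the paper's own observation that the error vanishes identically when $w \in P^{N-M}\LRp{D^k}$: your argument is precisely the quantitative version of that remark. Both routes incur the same harmless volume factor when trading $\nor{v}_{L^2\LRp{D^k}}$ for $\nor{v}_{L^\infty\LRp{D^k}}$, which, as you correctly note, only sharpens the stated bound.
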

\begin{proof}
Over each element $D^k$, the weight-adjusted inner product gives
\[
\waip{u,v}{1/w} = \LRp{T^{-1}_{1/w} u,v}_{L^2\LRp{D^k}} = \LRp{\frac{1}{w}wT^{-1}_{1/w} u,v}_{L^2\LRp{D^k}}.
\]
If ${w}$ is polynomial of degree $N-M$, then $\LRp{\frac{1}{w}T^{-1}_{1/w} u,{v}{w}}_{L^2} = \LRp{u,{v}{w}}_{L^2}$ and the moment of the difference is zero.  If ${w} \not\in P^{N-M}\LRp{D^k}$, then $\LRp{\frac{1}{w}T^{-1}_{1/w} u,{v}{w}}_{L^2} \neq \LRp{u,{v}{w}}_{L^2}$.  To bound the difference, we can add and subtract the projection of ${vw}$ onto $P^N$ 
\begin{align*}
 &\LRp{\frac{1}{w}wT^{-1}_{1/w} u,v}_{L^2\LRp{D^k}} \\
 &= \LRp{\frac{1}{w}T^{-1}_{1/w} u, {v}{w}-\Pi_N\LRp{{v}{w}}}_{L^2\LRp{D^k}} + \LRp{ \frac{1}{w}T^{-1}_{1/w} u, \Pi_N\LRp{{v}{w}}}_{L^2\LRp{D^k}} \\
 &= \LRp{\frac{1}{w}T^{-1}_{1/w} u, {v}{w}-\Pi_N\LRp{{v}{w}}}_{L^2\LRp{D^k}} + \LRp{ u,\Pi_N\LRp{{v}{w}}}_{L^2\LRp{D^k}}.
\end{align*}
The difference then becomes
\begin{align*}
&\LRb{ \LRp{u,{v}{w}}_{L^2\LRp{D^k}} - \waip{u,v}{1/w}} \\
&= \LRb{\LRp{u,{v}{w} - \Pi_N\LRp{{v}{w}}}_{L^2\LRp{D^k}} + \LRp{\frac{1}{w}T_{1/w}^{-1} u,{v}{w}-\Pi_N\LRp{{v}{w}}}_{L^2\LRp{D^k}}}\\ 
&\leq \LRb{\LRp{u - \frac{1}{w}T^{-1}_{1/w} u,{v}{w}-\Pi_N\LRp{{v}{w}}}_{L^2\LRp{D^k}}} \\
&\leq \nor{u-\frac{1}{w}T^{-1}_{1/w} u}_{L^2\LRp{D^k}} \nor{{v}{w}-\Pi_N\LRp{{v}{w}}}_{L^2\LRp{D^k}}.
\end{align*}
For $vw$ sufficiently regular, the Bramble-Hilbert lemma implies
\[
\nor{{v}{w}-\Pi_N\LRp{{v}{w}}}_{L^2\LRp{D^k}} \leq C\sqrt{J^k} \LRb{{v}{w}}_{W^{N+1,2}\LRp{\widehat{D}}}.
\]
By quasi-regularity of $D^k$ and the Leibniz product rule, the seminorm can be bounded by
\[
\LRb{{v}{w}}_{W^{N+1,2}\LRp{\widehat{D}}} \leq C\frac{1}{\sqrt{J^k}}h^{N+1}\nor{v}_{W^{N+1,2}\LRp{D^k}}\nor{w}_{W^{N+1,\infty}\LRp{D^k}}.
\]
Applying a scaling argument for $v\in P^{M}\LRp{D^k}$ and Bernstein's inequality \cite{sarantopoulos1991bounds} then yields
\[
\nor{v}_{W^{N+1,2}\LRp{D^k}} \leq C_B h^{-M} \nor{v}_{L^{\infty}\LRp{D^k}}.
\]
where $C_B$ is a constant depending on $N$.  This implies that 
\[
\nor{{v}{w}-\Pi_N\LRp{{v}{w}}}_{L^2\LRp{D^k}} \leq C h^{N+1-M}\nor{w}_{W^{N+1,2}\LRp{D^k}}\nor{v}_{L^{\infty}\LRp{D^k}}.
\]
We can then use Theorem~\ref{lemma:mult} to bound the remaining term
\begin{align*}
&\nor{u-\frac{1}{w}T^{-1}_{1/w}u}_{L^2\LRp{D^k}}\\
 &= \nor{\frac{1}{w}\LRp{{u}{w}-T^{-1}_{1/w}u}}_{L^2\LRp{D^k}}  \leq \nor{\frac{1}{w}}_{L^{\infty}\LRp{D^k}}\nor{{u}{w} - T^{-1}_{1/w} u}_{L^2\LRp{D^k}}\\
&\leq C h^{N+1}\nor{{w}}_{L^{\infty}\LRp{D^k}}\nor{\frac{1}{w}}^2_{L^{\infty}\LRp{D^k}} \nor{w}_{W^{N+1,\infty}\LRp{D^k}} \nor{u}_{W^{N+1,2}\LRp{D^k}}.
\end{align*}
Combining these two estimates 
gives the desired bound.
\end{proof}

\section{A low storage weight-adjusted DG method}
\label{sec:wadg}
Using the weight-adjusted inner product, we can now introduce the weight-adjusted DG method.  Recall the DG formulation of the pressure equation introduced in Section~\ref{sec:form} 
\begin{align*}
\int_{D^k} \frac{1}{c^2}\pd{p}{t}{}v \diff x &= -\int_{D^k} \Div\bm{u}v \diff x + \int_{\partial D^k} \frac{1}{2}\LRp{\tau_p\jump{p} - \bm{n}\cdot \jump{\bm{u}} }v^- \diff x, \quad \forall v\in P^N\LRp{D^k}.
\end{align*}
The standard DG method is shown to be energy stable with respect to the $L^2$ norm weighted by $1/c^2$ which appears on the left hand of the pressure equation, which corresponds to the weighted $L^2$ inner product 
\[
\wip{p,v}{w} =  \int_{D^k} T_{w} p v = \int_{D^k} w p v = \qquad \forall v\in P^N\LRp{D^k}.
\]
where 
\[
w(\bm{x}) = 1/c^2(\bm{x}).
\]
For the remainder of this paper, we will assume this specific definition of $w(x)$ for the acoustic wave equation.  Motivated by the fact that $T_{1/w}^{-1}u \approx wu$, the weight-adjusted DG method approximates the weighted left hand side inner product in the DG pressure equation with the weight-adjusted inner product in Section~\ref{sec:ip} 
\begin{align*}
\int_{D^k} T^{-1}_{1/w}\LRp{\pd{p}{t}} v\diff x &= -\int_{D^k} \LRp{\Div\bm{u}}v \diff x + \int_{\partial D^k} \frac{1}{2}\LRp{\tau_p\jump{p} - \bm{n}\cdot \jump{\bm{u}} }v^- \diff x.
\end{align*}
We note that the constants appearing in the bounds for Theorem~\ref{lemma:mult} are identical for both $T_{w}$ and $T^{-1}_{1/w}$, which suggests that the behavior of the weight-adjusted DG method should be very similar to that of the standard DG method.  

A crucial aspect of the weight-adjusted DG method is that it is energy stable, due to the use of an equivalent inner product in the DG pressure equation.
Repeating the analysis in Section~\ref{sec:energy} for the weight-adjusted DG method gives that
\begin{align}
\pd{}{t}\sum_{k}\int_{D^k} \LRp{T_{1/w}^{-1}p}p    + \LRb{\bm{u}}^2 = - \sum_{k}\frac{1}{2}\int_{\partial D^k}  \tau_p \jump{p}^2 + \tau_u \jump{\bm{u}}^2 < 0,
\label{eq:stability2}
\end{align}
and since
\[
\sum_{k}\int_{D^k} \LRp{T_{1/w}^{-1}p} p = \sum_{k}\LRp{T_{1/w}^{-1}p,p}_{L^2\LRp{D^k}} = \sum_k \waip{p,p}{1/w} > 0
\]
for $w = 1/c^2$.  The left hand side of (\ref{eq:stability2}) implies that a squared norm on $\LRp{p,\bm{u}}$ is non-increasing in time.  Additionally, by Lemma~\ref{lemma:equiv}, this normed quantity is equivalent to the $L^2$ norm of $p,\bm{u}$ with the same equivalence constants as the weighted $L^2$ inner product used in (\ref{eq:stability}) for the standard DG method.  

By replacing the weighted inner product on the left hand side with an approximation, a different mass matrix $\tilde{\bm{M}}^k$ is induced
\[
\LRp{\tilde{\bm{M}}^k}_{ij} = \waip{\phi_j,\phi_i}{1/w}. 
\]
For polynomial functions $u$ on an element $D^k$ with expansion coefficients $\bm{u}$, computing $u_w = T_{1/w}^{-1}u$ reduces to a square matrix multiplication
\[
\bm{u}_w = \LRp{\bm{M}^k_{1/w}}^{-1} \bm{M}^k \bm{u},
\]
where  $\bm{u}_w$ are the expansion coefficients of $u_w$ and $\bm{M}_{1/w}^k$ is defined entrywise
\[
\LRp{\bm{M}_{1/w}^k}_{ij} = \int_{D^k} \frac{1}{w}\phi_j \phi_i.
\]
Thus, the Gram matrix associated with the weight-adjusted inner product has the form
\[
\tilde{\bm{M}}^k = \bm{M}^k\LRp{\bm{M}^k_{1/w}}^{-1}{\bm{M}^k},
\]
resulting in a discrete formulation for the weight-adjusted DG method 
\begin{align*}
\bm{M}^k\LRp{\bm{M}^k_{1/w}}^{-1}{\bm{M}^k}\td{\bm{p}}{t} &=\sum_{i = 1,2,3}\bm{S}_{i}^k \bm{U}_j + \sum_{f=1}^{N_{\text{faces}}}\bm{M}^k_f F_p(\bm{p}^-,\bm{p}^+,\bm{U}^-,\bm{U}^+),\\
\bm{M}^k\td{\bm{U}_{x_i}}{t} &= \bm{S}_{i}^k \bm{p} + \sum_{f=1}^{N_{\text{faces}}}  \bm{n}_{i}\bm{M}^k_f  F_{u}(\bm{p}^-,\bm{p}^+,\bm{U}^-,\bm{U}^+), \qquad i = 1,2,3.
\end{align*}
For hexahedral elements with quadrature-based mass-lumping, this procedure reduces to collocation of $w(x) = 1/c^2(x)$ at quadrature points.  For tetrahedral elements (which do not admit high order mass lumped schemes under a $P^N$ approximation space \cite{chin1999higher,cohen2001higher}), this method provides a low storage implementation through the fact that
\[
\LRp{\bm{M}^k\LRp{\bm{M}^k_{1/w}}^{-1}{\bm{M}^k} }^{-1} = \LRp{\bm{M}^k}^{-1}{\bm{M}^k_{1/w}}\LRp{\bm{M}^k}^{-1}.
\]
For planar tetrahedra (and other affinely mapped elements), $\LRp{\bm{M}^k}^{-1} = \frac{1}{J^k} \widehat{\bm{M}}^{-1}$, requiring storage of only the reference array $\widehat{\bm{M}}^{-1}$.  The application of $\bm{M}_{1/w}^k$ can be done in a matrix-free manner: for $u \in P^N$ with expansion coefficients $\bm{u}$, 
\[
\LRp{\bm{M}_{1/w}^k \bm{u}}_i = \int_{\widehat{D}} \frac{1}{w} u \phi_i J^k.
\]
Each integral can be computed over the reference element using quadrature, requiring only $O(N^3)$ storage for values of ${c^2}$ at nodal or quadrature points.  

We introduce the weak differentiation matrices and lift matrices $\bm{L}^k_f$ for the face $f$ of $D^k$ 
\begin{align*}
\bm{D}_x =  \LRp{\bm{M}^k}^{-1}\bm{S}_x, \qquad
\bm{D}_y = \LRp{\bm{M}^k}^{-1}\bm{S}_y, \qquad
\bm{D}_z = \LRp{\bm{M}^k}^{-1}\bm{S}_z, \qquad
\bm{L}^k_f = \LRp{\bm{M}^k}^{-1}\bm{M}^k_f.
\end{align*}
For planar tetrahedra, these differentiation and lift matrices can be computed from linear combinations and scalings of reference derivative and lift matrices \cite{hesthaven2007nodal}.  The matrix form of the semi-discrete weight-adjusted DG pressure equation can then be written as
\begin{align}
\td{\bm{p}}{t} =\LRp{\bm{M}^k}^{-1}{\bm{M}^k_{1/w}} \LRp{\sum_{i = 1,2,3}\bm{D}_{i}^k \bm{U}_j + \sum_{f=1}^{N_{\text{faces}}}\bm{L}^k_f F_p(\bm{p}^-,\bm{p}^+,\bm{U}^-,\bm{U}^+)},
\label{eq:WadgDiscretePressure}
\end{align}
where we have referred to the weak differentiation matrices $\LRc{\bm{D}_x,\bm{D}_y,\bm{D}_z}$ as $\LRc{\bm{D}_1,\bm{D}_2,\bm{D}_3}$ for succinctness.  We note that (for an appropriate choices of flux $F_p$) the contribution
\begin{align}
\LRp{\sum_{i = 1,2,3}\bm{D}_{x_i}^k \bm{U}_j + \sum_{f=1}^{N_{\text{faces}}}\bm{L}^k_f F_p(\bm{p}^-,\bm{p}^+,\bm{U}^-,\bm{U}^+)}
\label{eq:wadgDiv}
\end{align}
is simply the the DG discretization of the divergence operator and the pressure equation DG right hand contribution for wave propagation in homogeneous media.  This illustrates the fact that implementation of the weight-adjusted DG method is relatively non-invasive.  For example, a time-domain DG code with explicit timestepping for homogeneous media typically involves one step to compute right hand side contributions and one step to evolve the solution in time using a time integration scheme.  For such a code, the weight-adjusted DG method for heterogeneous media could be implemented by adding only a single additional step which applies $\LRp{\bm{M}^k}^{-1}{\bm{M}^k_{1/w}}$ to the right hand side (for homogeneous media) before time integration.  

\subsection{Consistent scaling by $c^2$}

The strong form of the pressure equation can be rescaled by $c^2$ 
\begin{align}
\pd{p}{t} + c^2 \Div u = 0.
\label{eq:repressure}
\end{align}
However, since this is in non-conservative form, it is non-trivial to derive appropriate formulations and numerical fluxes which result in an energy stable DG methods.  

As suggested by (\ref{eq:WadgDiscretePressure}) and (\ref{eq:wadgDiv}), the weight-adjusted DG method can be interpreted as a way to consistently rescale by $c^2$ while maintaining the conservative form of the wave equation.  Recall the definition of the lift operator $L^k_f$ for a given face $f$ of the element $D^k$
\[
\LRp{ L^k_f(u), v}_{ D^k} = \LRp{ u, v}_{\partial D^k_f}, \qquad v \in V_h\LRp{D^k}.
\]
The weight-adjusted DG formulation can then be expressed using $L_f^k$ as 
\begin{align}
\int_{D^k} T_{1/w}^{-1}{\pd{p}{t}} v\diff x &+ \int_{D^k} \LRp{\Div\bm{u} -\sum_{f}    L^k_f\LRp{F_p({p}^-,{p}^+,\bm{u}^-,\bm{u}^+)}}v \diff x = 0  \nonumber \\
\int_{D^k} \pd{\bm{u}}{t}{}\bm{\tau} \diff x &+ \int_{D^k} \LRp{\Grad p - \sum_f  L^k_f\LRp{F_{u}({p}^-,{p}^+,\bm{u}^-,\bm{u}^+)} \bm{n}^- }\bm{\tau} \diff x  = 0.
\end{align}
This is sometimes written in a more compact form 
\begin{align}
\LRp{T_{1/w}^{-1}\pd{p}{t},v }_{L^2\LRp{D^k}} &+ \LRp{ \Grad_h \cdot \bm{u},v}_{L^2\LRp{D^k}}=0\\
\LRp{\pd{\bm{u}}{t},\bm{\tau}}_{L^2\LRp{D^k}} &+\LRp{ \Grad_h p,\bm{\tau}}_{L^2\LRp{D^k}}=0, \qquad (v,\bm{\tau}) \in V_h \times \LRp{V_h}^d.
\end{align}
where we have introduced the weak DG gradient and divergence  $\Grad_h, \Grad_h\cdot$.  These weak DG differential operators are defined such that their restriction to an element $D^k$ yields
\begin{align}
\left.\Grad_h \cdot p\right|_{D^k} &= \left.\LRp{\Div p}\right|_{D^k} - \sum_{f} L^k_f\LRp{F_p(p,\bm{u})}\nonumber\\
\left.\Grad_h \bm{u}\right|_{D^k} &= \left.\LRp{\Grad \bm{u}}\right|_{D^k} - \sum_{f} \bm{n}^-L^k_f\LRp{F_u(p,\bm{u})},  
\label{eq:consistentrescale}
\end{align}
where $F_p(p,\bm{u}), F_u(p,\bm{u})$ are the numerical fluxes over a face $f$.  
The weight-adjusted DG method can be derived using the weak DG divergence in (\ref{eq:consistentrescale}) instead of the exact divergence.  Replacing the strong divergence of (\ref{eq:repressure}) with the weak DG divergence, then multiplying both sides of by a test function in $V_h$ and integrating results in the weight-adjusted DG formulation.  This incorporates the scaling by $c^2$ in a consistent manner, multiplying terms within volume integrals only.  Without introducing the lift operator, it is not immediately clear how to incorporate the scaling by $c^2$ within surface integrals.

\subsection{Convergence}

With the estimates in Section~\ref{sec:estimates} and consistency of the formulation, it is possible to derive \textit{a priori} error estimates for the weight-adjusted DG method.  We follow the approach of \cite{warburton2013low} to obtain an $O\LRp{h^{N+1/2}}$ bound on the $L^2$ error.  

For functions $u \in L^2\LRp{\Omega}$ such that $\left.u\right|_{D^k} \in W^{N+1,2}\LRp{D^k}$, we define the norm
\[
\nor{u}_{W^{N+1,p}\LRp{\Omega_h}} = \LRp{ \sum_k \nor{u}_{W^{N+1,p}\LRp{D^k}}^2}^{1/2}.
\]
We consider solutions $\LRp{p,\bm{u}} \in W^{N+1,2}\LRp{\Omega_h} \times \LRp{W^{N+1,2}\LRp{\Omega_h}}^d$ over the time interval $[0,T]$ such that
\begin{align*}
\sup_{t' \in [0,T]}\nor{p}_{W^{N+1,2}\LRp{\Omega_h}} &< \infty, \qquad \sup_{t' \in [0,T]}\nor{\bm{u}}_{W^{N+1,2}\LRp{\Omega_h}} < \infty,\\
\sup_{t' \in [0,T]}\nor{\pd{p}{t}}_{W^{N+1,2}\LRp{\Omega_h}} &< \infty, \qquad \sup_{t' \in [0,T]}\nor{\pd{\bm{u}}{t}}_{W^{N+1,2}\LRp{\Omega_h}} < \infty.
\end{align*}
Under these regularity assumptions,\footnote{These assumptions may be relaxed somewhat using techniques from \cite{grote2007interior}.} the following theorem holds for $p$ and the components $\bm{u}_i$ of the velocity:
\begin{theorem}[Theorem 3.3 of \cite{warburton2013low}]
\begin{align*}
\nor{p - \Pi_N p}_{\partial D^k} &\leq C h^{N+1/2}\nor{p}_{W^{N+1,2}(D^k)}\\
\nor{\bm{u}\cdot \bm{n} - \Pi_N \bm{u}\cdot \bm{n}}_{\partial D^k} &\leq C h^{N+1/2}\nor{\bm{u}}_{W^{N+1,2}(D^k)}, \qquad i = 1,2,3.
\end{align*}
\label{thm:tracereg}
\end{theorem}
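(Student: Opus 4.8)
The plan is to treat Theorem~\ref{thm:tracereg} as a standard trace-regularity estimate, obtained by combining a scaled multiplicative trace inequality on the element $D^k$ with the $L^2$- and $W^{1,2}$-projection-error bounds for $\Pi_N$, and then applying the scalar estimate componentwise to recover the result for $\bm{u}\cdot\bm{n}$. First I would record the scaled trace inequality: for any $v \in W^{1,2}\LRp{D^k}$ on a quasi-regular element of diameter $h$,
\[
\nor{v}_{\partial D^k}^2 \leq C\LRp{h^{-1}\nor{v}_{L^2\LRp{D^k}}^2 + h\,\LRb{v}_{W^{1,2}\LRp{D^k}}^2},
\]
which follows by mapping to the reference element $\widehat{D}$, where the unscaled trace inequality $\nor{\hat v}_{\partial\widehat{D}}^2 \leq C\LRp{\nor{\hat v}_{L^2\LRp{\widehat{D}}}^2 + \LRb{\hat v}_{W^{1,2}\LRp{\widehat{D}}}^2}$ holds on the fixed domain, and then tracking the powers of $h$ picked up by the boundary measure, the volume measure, and the gradient under the affine map $\bm{\Phi}^k$.

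Next I would supply the two approximation ingredients, applied to $v = p - \Pi_N p$. The $L^2$ bound $\nor{p - \Pi_N p}_{L^2\LRp{D^k}} \leq C h^{N+1}\LRb{p}_{W^{N+1,2}\LRp{D^k}}$ is exactly the interpolation estimate already recorded in Section~\ref{sec:estimates}. The companion seminorm bound $\LRb{p - \Pi_N p}_{W^{1,2}\LRp{D^k}} \leq C h^{N}\LRb{p}_{W^{N+1,2}\LRp{D^k}}$ is the step that needs care, since the $L^2$ projection is not directly $W^{1,2}$-stable. I would obtain it by inserting a Bramble--Hilbert polynomial approximant $\pi p \in P^N\LRp{D^k}$ and using the triangle inequality $\LRb{p - \Pi_N p}_{W^{1,2}} \leq \LRb{p - \pi p}_{W^{1,2}} + \LRb{\pi p - \Pi_N p}_{W^{1,2}}$; the first term is $O\LRp{h^N}$ by Bramble--Hilbert, while for the second term, since $\pi p - \Pi_N p$ is a polynomial, an inverse inequality gives $\LRb{\pi p - \Pi_N p}_{W^{1,2}} \leq C h^{-1}\nor{\pi p - \Pi_N p}_{L^2\LRp{D^k}} \leq C h^{-1}\LRp{\nor{\pi p - p}_{L^2} + \nor{p - \Pi_N p}_{L^2}} \leq C h^N\LRb{p}_{W^{N+1,2}\LRp{D^k}}$, where the final inequality uses the $L^2$ optimality of $\Pi_N$.

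Substituting these two bounds into the scaled trace inequality gives $\nor{p - \Pi_N p}_{\partial D^k}^2 \leq C\LRp{h^{-1}h^{2N+2} + h\,h^{2N}}\LRb{p}_{W^{N+1,2}\LRp{D^k}}^2 = C h^{2N+1}\LRb{p}_{W^{N+1,2}\LRp{D^k}}^2$, and taking square roots (and bounding the seminorm by the full norm) yields the claimed $O\LRp{h^{N+1/2}}$ estimate for $p$. For the velocity, on a planar simplex each face normal $\bm{n}$ is constant, so $\Pi_N$ commutes with the constant linear combination $\bm{u}\cdot\bm{n}$, giving $\bm{u}\cdot\bm{n} - \Pi_N\bm{u}\cdot\bm{n} = \LRp{\bm{u} - \Pi_N\bm{u}}\cdot\bm{n}$; since $\LRb{\bm{n}} = 1$, the scalar estimate applied componentwise yields the stated bound. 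The main obstacle is the $W^{1,2}$-seminorm estimate for the projection error, together with careful bookkeeping of the $h$-scalings in the trace inequality; once those are in place the $h^{N+1/2}$ rate follows by a purely mechanical combination.
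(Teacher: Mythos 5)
Your proposal is correct, and it is essentially the canonical argument: this paper does not prove the statement at all (it imports it verbatim as Theorem 3.3 of the cited low-storage curvilinear DG reference), and the proof in that reference rests on exactly the ingredients you assemble — a scaled trace inequality on quasi-regular elements combined with the $O(h^{N+1})$ $L^2$ and $O(h^N)$ $H^1$-seminorm projection error bounds, the latter obtained via a Bramble--Hilbert approximant plus an inverse inequality. Your handling of the velocity term via the piecewise-constant face normals on planar simplices is also the intended reading of the (slightly loosely stated) second inequality.
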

\note{
We will also use the following modified Gronwall's inequality
\begin{lemma}[Lemma 1.10 in \cite{dolejvsi2015discontinuous}]
Suppose that $a,b,c,d \in C[0,T]$ are non-negative functions and that
\[
a^2(t) + b(t) \leq c(t) + 2\int_0^t d(s) a(s) \diff{s}, \qquad \forall t\in [0,T].
\]
Then, for any $t\in [0,T]$,
\[
\sqrt{a^2(t) + b(t)} \leq \sup_{s\in[0,t]} \sqrt{c(s)} + \int_0^t d(s) \diff{s}.
\]
\label{lemma:gron}
\end{lemma}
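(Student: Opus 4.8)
The plan is to apply the classical ``square-root'' differentiation trick for Gronwall-type inequalities, modified to accommodate the two features that make this statement nonstandard: the right-hand side $c(t)$ is not constant, and the unknown $a$ appears quadratically on the left but only linearly inside the integral. First I would discard the term $b(t) \ge 0$ wherever convenient and introduce the monotone envelope $\psi(t) = \sup_{s\in[0,t]}\sqrt{c(s)}$, which is continuous, non-decreasing, and satisfies $c(\tau) \le \psi^2(t)$ for every $\tau \le t$. Fixing an arbitrary terminal time $t \in [0,T]$, this replaces the varying $c$ by the constant $\psi^2(t)$ in all running estimates for $\tau \in [0,t]$.

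Next, to keep the right-hand side strictly positive so that the square root is differentiable, I would regularize by a parameter $\epsilon > 0$ and set
\[
A(\tau) = \psi^2(t) + \epsilon + 2\int_0^\tau d(s) a(s)\,\diff{s}, \qquad \tau \in [0,t].
\]
Because $d$ and $a$ are continuous and non-negative, $A$ is continuously differentiable with $A'(\tau) = 2 d(\tau) a(\tau)$ and $A(\tau) \ge \epsilon > 0$. The hypothesis, together with $b \ge 0$ and $c(\tau) \le \psi^2(t)$, yields $a^2(\tau) \le A(\tau)$ and hence $a(\tau) \le \sqrt{A(\tau)}$.

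The key step is then to differentiate the square root: since $a(\tau) \le \sqrt{A(\tau)}$, one has $\bigl(\sqrt{A}\bigr)'(\tau) = d(\tau) a(\tau)/\sqrt{A(\tau)} \le d(\tau)$. Integrating over $[0,t]$ gives $\sqrt{A(t)} \le \sqrt{A(0)} + \int_0^t d(s)\,\diff{s} = \sqrt{\psi^2(t)+\epsilon} + \int_0^t d(s)\,\diff{s}$. Finally, at $\tau = t$ the hypothesis gives $a^2(t)+b(t) \le \psi^2(t) + 2\int_0^t d(s)a(s)\,\diff{s} < A(t)$, so $\sqrt{a^2(t)+b(t)} \le \sqrt{A(t)}$, and letting $\epsilon \to 0^+$ produces the claimed bound with $\psi(t) = \sup_{s\in[0,t]}\sqrt{c(s)}$.

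I expect the main obstacle to be keeping the two subtleties disentangled. The non-constant $c$ is exactly what forces me to freeze the terminal time $t$ and to build the constant $\psi^2(t)$ into $A$ rather than $c(\tau)$ itself; had I placed $c(\tau)$ inside $A$, the computation of $A'$ would produce an uncontrolled $c'(\tau)$ contribution. The quadratic-versus-linear mismatch is dissolved by the square-root trick, but that manipulation is only legitimate after the $\epsilon$-regularization guarantees $A$ stays bounded away from zero; I would therefore be careful to carry $\epsilon$ through every inequality and pass to the limit only at the very end.
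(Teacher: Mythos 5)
Your proof is correct: freezing the terminal time $t$, replacing $c$ by the constant envelope $\sup_{s\in[0,t]}\sqrt{c(s)}$, regularizing by $\epsilon$ so that $\sqrt{A}$ is differentiable, and then using $a(\tau)\leq\sqrt{A(\tau)}$ to get $\bigl(\sqrt{A}\bigr)'\leq d$ is the standard argument for this type of inequality, and every step checks out. Note, however, that the paper itself gives no proof of this statement; it is quoted verbatim as Lemma 1.10 of the cited reference (Dolej\v{s}\'i and Feistauer), so there is no in-paper proof to compare against---your blind reconstruction is essentially the textbook proof of that cited result.
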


Then, we have the following \textit{a priori} estimate for the weight-adjusted DG solution $\LRp{p_h,\bm{u}_h}$ at time $T$
\begin{theorem}
\begin{align*}
&\nor{\LRp{p(\bm{x},T),\bm{u}(\bm{x},T)} -\LRp{p_h(\bm{x},T),\bm{u}_h(\bm{x},T)}}_{\L} \leq \\
&\LRp{C_1 + C_2 T} h^{N+1/2}\sup_{t'\in [0,T]}\LRp{ \nor{\LRp{p,\bm{u}}}_{W^{N+1,2}(\Oh)} + h^{1/2}\nor{\frac{1}{c^2}}_{W^{N+1,\infty}\LRp{\Oh}}\nor{\pd{}{t}\LRp{p,\bm{u}}}_{W^{N+1,2}(\Oh)}},
\end{align*}
where $C_2$ depends on $c_{\min},c_{\max}$.  
\end{theorem}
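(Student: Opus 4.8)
The plan is to follow the standard energy-based convergence argument for DG methods, adapted to account for the weight-adjusted inner product in the pressure equation. First I would decompose the error in each variable into an approximation (projection) part and a discrete part: writing $p - p_h = (p - \Pi_N p) + (\Pi_N p - p_h)$ and similarly for each component of $\bm{u}$, and denoting the projected discrete errors by $\varepsilon_p = \Pi_N p - p_h$ and $\bm{\varepsilon}_u = \Pi_N \bm{u} - \bm{u}_h$. Since the projection errors are controlled directly by the interpolation estimate $\nor{u - \Pi_N u}_{D^k} \leq Ch^{N+1}\nor{u}_{W^{N+1,2}(D^k)}$ (and the trace estimate of Theorem~\ref{thm:tracereg} on element boundaries), the core of the work is to bound the discrete error $(\varepsilon_p, \bm{\varepsilon}_u)$.

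Next I would derive an error equation by substituting the exact solution into the weight-adjusted DG formulation and subtracting the discrete equations. The crucial point is that the exact solution satisfies the standard weighted pressure equation with $(wp_t, v)_{L^2(D^k)}$, whereas the discrete scheme uses the weight-adjusted term $\waip{\partial_t p_h}{v}{1/w} = (T^{-1}_{1/w}\partial_t p_h, v)_{L^2(D^k)}$. Their difference is exactly the quantity controlled by Theorem~\ref{lemma:cons}: taking $v \in P^M(D^k)$ with $M$ chosen appropriately (here $M=0$ or small, since the test functions against which consistency is measured are the discrete errors, bounded in $L^\infty$ via inverse inequalities), the consistency error in the mass term is $O(h^{2N+2-M})$, hence certainly $O(h^{N+1})$ after summing over elements. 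I would then take $v = \varepsilon_p$ and $\bm{\tau} = \bm{\varepsilon}_u$ as test functions, add the equations, and sum over all elements. The volume flux and surface flux terms telescope exactly as in the energy-stability computation of Section~\ref{sec:energy}, producing the dissipative jump terms $-\frac{1}{2}\sum_k \int_{\partial D^k}(\tau_p\jump{\varepsilon_p}^2 + \tau_u(\bm{n}\cdot\jump{\bm{\varepsilon}_u})^2)$ on the right, which can be discarded since they are non-positive. What remains on the right are the consistency residuals: the mass-term defect bounded by Theorem~\ref{lemma:cons}, and the flux-consistency terms involving jumps of the projection errors across faces, which are bounded by the trace estimate of Theorem~\ref{thm:tracereg} at order $h^{N+1/2}$.

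The energy identity then takes the form $\frac{d}{dt}\LRp{\waip{\varepsilon_p}{\varepsilon_p}{1/w} + \nor{\bm{\varepsilon}_u}_{L^2}^2} \leq R(t)$, where $R(t)$ collects the consistency residuals, each factored as a constant times $h^{N+1/2}$ (from traces) or $h^{N+1}$ (from volume terms) times a Sobolev norm of the exact solution or its time derivative, multiplied by the energy norm of the current discrete error. Using Lemma~\ref{lemma:equiv}, the weight-adjusted energy is equivalent to $\nor{(\varepsilon_p, \bm{\varepsilon}_u)}_{L^2}^2$ with constants $\sqrt{w_{\min}}, \sqrt{w_{\max}}$; since $w = 1/c^2$, these depend only on $c_{\min}, c_{\max}$, which is the source of the $C_2$ dependence stated in the theorem. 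Integrating in time, noting the initial error is controlled by the projection of the initial data, and applying the modified Gronwall inequality of Lemma~\ref{lemma:gron} (with $a(t)$ the square root of the energy, $d(s)$ the residual coefficients) converts the quadratic-in-$a$ bound into a linear estimate $a(T) \leq \sup_s\sqrt{c(s)} + \int_0^T d(s)\,ds$. Combining the discrete-error bound with the projection errors via the triangle inequality yields the stated $(C_1 + C_2 T)h^{N+1/2}$ result.

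The main obstacle I anticipate is the careful bookkeeping of the consistency error arising from the weight adjustment itself, i.e., assembling the residual between $(wp_t, v)$ and $(T^{-1}_{1/w}p_{h,t}, v)$ in a form to which Theorem~\ref{lemma:cons} applies. One must split this defect so that Theorem~\ref{lemma:cons} is invoked against the exact-solution time derivative $\partial_t p$ (contributing the $h^{1/2}\nor{1/c^2}_{W^{N+1,\infty}}\nor{\partial_t(p,\bm{u})}$ term in the bound), while the remaining portion involving the discrete error is absorbed into the equivalence-of-norms machinery rather than treated as a residual. Tracking exactly which factors of $h^{N+1/2}$ versus $h^{N+1}$ emerge — and confirming that the subleading $h^{1/2}$-weighted derivative term is the one that the weight-adjustment introduces beyond the standard DG estimate — is the delicate part; the flux and energy-telescoping steps are essentially routine repetitions of the standard analysis in Section~\ref{sec:energy}.
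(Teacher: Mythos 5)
Your overall architecture --- splitting $U - U_h$ into a projection error $\epsilon$ and a discrete error $\eta$, forming the error equation between the exact (weighted) and discrete (weight-adjusted) formulations, testing with $\eta$, discarding the dissipative jump terms, bounding the flux consistency terms with Theorem~\ref{thm:tracereg}, and closing with the modified Gronwall inequality of Lemma~\ref{lemma:gron} --- is the same as the paper's. The genuine gap is in how you control the mass-term defect. You invoke Theorem~\ref{lemma:cons} with ``$M=0$ or small,'' but the test function in the error equation is the discrete error $\eta_p$, a full degree-$N$ polynomial, so $M=N$ is forced and the estimate reads $Ch^{N+2}\LRp{\cdots}\nor{\eta_p}_{L^{\infty}\LRp{D^k}}$. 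That $L^{\infty}$ factor is not harmless: $\eta_p$ is precisely the unknown being estimated, so to feed it into the Gronwall argument you must convert it to $\nor{\eta_p}_{L^2}$ by an inverse inequality, which costs $h^{-d/2}$. You are left with a bound of order $h^{N+2-d/2}\nor{\eta_p}_{L^2}$ --- equal to $h^{N+1}$ in 2D but only $h^{N+1/2}$ in 3D, which is too weak to produce the $h^{1/2}\nor{{1}/{c^2}}_{W^{N+1,\infty}\LRp{\Oh}}\nor{\pd{}{t}\LRp{p,\bm{u}}}_{W^{N+1,2}\LRp{\Oh}}$ structure of the stated bound.

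The paper avoids this entirely by never measuring the weight-adjustment defect against a test function. It defines the pointwise consistency error $\delta_p = \frac{1}{c^2}p - T^{-1}_{c^2}p$ (using $T^{-1}_{c^2}\Pi_h = T^{-1}_{c^2}$ from Lemma~\ref{lemma:properT}) and bounds $\nor{\pd{\delta_p}{t}}_{L^2}$ directly by Theorem~\ref{lemma:mult}, i.e.\ the $L^2$ estimate $\nor{uw - T^{-1}_{1/w}u}_{L^2\LRp{D^k}} \leq C_w h^{N+1}\nor{u}_{W^{N+1,2}\LRp{D^k}}$ applied with $u = \pd{p}{t}$ and $w = 1/c^2$, and then applies Cauchy--Schwarz against $\nor{\eta}_{L^2}$. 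This yields the $h^{N+1}$ rate in every dimension, with the single (not squared) factor of $\nor{1/c^2}_{W^{N+1,\infty}}$ appearing in the statement, and feeds cleanly into Gronwall. So the repair is local: replace your use of Theorem~\ref{lemma:cons} by Theorem~\ref{lemma:mult} applied to the exact time derivative $\pd{p}{t}$; the remainder of your outline then goes through essentially as in the paper.
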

}
\begin{proof}
We introduce group variables $U = \LRp{p,\bm{u}}$ and $V = \LRp{v,\bm{\tau}}$ to rewrite the variational formulation as
\[
\LRp{\pd{U}{t},V}_{w,\Omega} + a(U,V) + b(U,V) = 0,
\]
where $\LRp{U,V}_{w,\Omega}$ is 
\[
\LRp{U,V}_{w,\Omega} = \sum_{k} \waip{p,v}{c^2} + \LRp{\bm{u},\bm{\tau}}_{L^2\LRp{D^k}}.
\]
The volume and surface contributions to the formulation are given by
\begin{align*}
a(U,V) &= \sum_k \int_{D^k} -\bm{u}\cdot\Grad v +  \Grad p\cdot \bm{\tau}\\
b(U,V) &= \sum_{k}\int_{\partial D^k} \LRp{\frac{\tau_p}{2}\jump{p} - \avg{\bm{u}}\cdot\bm{n}^-} v + \frac{1}{2}\LRp{{\tau_u}\jump{u}\cdot\bm{n}^- - \jump{p}} \bm{\tau}\cdot \bm{n}^-.
\end{align*}
The proof of energy stability implies that $b(U,V)$ is positive semi-definite, and that
\begin{align*}
b(U,U) &= \frac{1}{2}\sum_k \tau_p \nor{\jump{p}}^2_{L^2\LRp{\partial D^k}} + \tau_u \nor{\jump{\bm{u}}\cdot \bm{n}}^2_{L^2\LRp{\partial D^k}}\\
\frac{1}{2}\pd{}{t}(U,U)_{w,\Omega} &= -b(U,U).
\end{align*}
Let $\Pi_h: L^2\LRp{\Omega} \rightarrow \bigoplus_{k} P^N\LRp{D^k}$ be the $L^2$ projection onto the space of degree $N$ polynomials over the triangulation $\Omega_h$.  The error $E$ between the exact solution $U$ and the the weight-adjusted DG solution $U_h = \LRp{p_h,\bm{u}_h}$ can be defined in terms of the interpolation error $\epsilon$ and discretization error $\eta$
\[
E = U-U_h = \LRp{U-\Pi_h U} + \LRp{\Pi_h U - U_h} = \epsilon + \eta.
\]
Since the interpolation error $\epsilon$ can be bounded by regularity assumptions, what remains is to bound the discretization error $\eta = \Pi_h\LRp{U-U_h}$ at time $T$.  

\note{
Assuming sufficient regularity \cite{brenner2007mathematical, hesthaven2007nodal}, the exact solution at time $T$ satisfies the DG formulation (\ref{eq:form}) with weighted $L^2$ inner product
\begin{align*}
\LRp{\frac{1}{c^2}\pd{p}{t},v}_{\Omega} + \LRp{\pd{\bm{u}}{t},\bm{\tau}}_{\Omega} + a(U,V) + b(U,V) &= 0, \qquad \forall V\in V_h,
\end{align*}
while the discrete solution satisfies the WADG formulation
\begin{align*}
\LRp{\pd{U_h}{t},V}_{w,\Omega} + a(U_h,V) + b(U_h,V) &= 0, \qquad \forall V\in V_h.
\end{align*}
Taking $V = \eta$, subtracting these two equations and rearranging yields the error equation
\begin{equation}
\LRp{\frac{1}{c^2}\pd{p}{t},\eta_p}_{\Omega} + \LRp{\pd{\bm{u}}{t},\bm{\eta}_u}_{\Omega} - \LRp{\pd{U_h}{t},\eta}_{w,\Omega} + b(\eta,\eta) = a(\epsilon,\eta) + b(\epsilon,\eta).
\label{eq:erroreq}
\end{equation}
where we have used $a(\eta,\eta) = 0$ by skew-symmetry.  Integrating by parts gives
\[
a(\epsilon,\eta) = \sum_k \int_{D^k} -\bm{\epsilon}_u\Grad \eta_p -  \epsilon_p \Div \bm{\eta}_u + \int_{\partial D^k} p^- \bm{\eta}_u\cdot \bm{n},
\]
where $\epsilon_p, \bm{\epsilon}_u$ and $\eta_p,\bm{\eta}_u$ are the $p$ and $\bm{u}$ components of the interpolation and discretization error, respectively.  For affinely mapped elements, $\Div\bm{\eta}_u, \Grad\eta_p$ are polynomial, and volume terms disappear through orthogonality of the $L^2$ projection to polynomials up to degree $N$.  We can then bound the contribution by combining contributions over shared faces and applying the arithmetic-geometric mean inequality
\begin{align*}
a(\epsilon,\eta) + b(\epsilon,\eta) &= \frac{1}{2} \sum_{k}\int_{\partial D^k} \LRp{\frac{\tau_p}{2}\jump{\epsilon_p} - \avg{\bm{\epsilon}_u}\cdot\bm{n}^-}\jump{ \eta_p} + \LRp{\frac{\tau_u}{2}\jump{\bm{\epsilon}_u}\cdot\bm{n}^- - \avg{\epsilon_p}} \jump{\bm{\eta}_u}\cdot \bm{n}^- \\
&\leq \tilde{C_\tau} \sum_{k}\int_{\partial D^k} \LRp{\jump{\epsilon_p} - \avg{\bm{\epsilon}_u}\cdot\bm{n}^-} \frac{\tau_p}{2}\jump{ \eta_p} + \LRp{\jump{\bm{\epsilon}_u}\cdot\bm{n}^- - \avg{\epsilon_p}} \frac{\tau_u}{2}\jump{\bm{\eta}_u}\cdot \bm{n}^- \\
&\leq C_\tau \sum_k\int_{\partial D^k} \LRb{\epsilon}^2 \LRp{ \frac{\tau_p}{2}\nor{\jump{ \eta_p}}^2_{L^2\LRp{\partial D^k}} + \frac{\tau_u}{2}\nor{\jump{\bm{\eta}_u}\cdot \bm{n}}^2_{L^2\LRp{\partial D^k}} }.  
\end{align*}
Applying Young's inequality with $\alpha = C_\tau/2$ then gives
\[
\LRb{a(\epsilon,\eta) + b(\epsilon,\eta)} \leq b(\eta,\eta) + \frac{C_\tau^2}{4} \sum_k \nor{{\epsilon}}^2_{L^2\LRp{\partial D^k}}.
\]
Terms involving the time derivative of pressure can be controlled by introducing the $L^2$ projection and using properties of $T_{c^2}^{-1}$
\begin{align*}
\LRp{\frac{1}{c^2}\pd{p}{t}-T_{c^2}^{-1} \Pi_h \pd{p}{t},\eta_p}_{\Omega} &= \LRp{\frac{1}{c^2}\pd{p}{t} - T_{c^2}^{-1} \Pi_h \pd{p}{t},\eta_p}_{\Omega} +  \LRp{T_{c^2}^{-1} \Pi_h\pd{p}{t}-T_{c^2}^{-1} \pd{p_h}{t},\eta_p}_{\Omega} \\
&= \LRp{\pd{\delta_p}{t},\eta_p}_{\Omega} +  \LRp{T_{c^2}^{-1}\pd{\eta_p}{t} ,\eta_p}_{\Omega} = \LRp{\pd{\delta_p}{t},\eta_p}_{\Omega} +  \frac{1}{2}\pd{}{t}\LRp{T_{c^2}^{-1}\eta_p ,\eta_p}_{\Omega},
\end{align*}
where $\delta_p = \frac{1}{c^2}p - T_{c^2}^{-1} \Pi_h p= \frac{1}{c^2}p - T_{c^2}^{-1} p$ is the WADG consistency error in the pressure variable.  Terms involving time derivatives of velocity satisfy
\[
\LRp{\pd{\bm{u}}{t},\bm{\eta}_u}_{\Omega} - \LRp{\pd{\bm{u}_h}{t},\bm{\eta}_u}_{\Omega} = \LRp{\pd{\bm{\eta}_u}{t},\bm{\eta}_u}_{\Omega} + \LRp{\pd{\bm{\epsilon}_u}{t},\bm{\eta}_u}_{\Omega}.
\]
Combining these gives 
\begin{align*}
\LRp{\frac{1}{c^2}\pd{p}{t},\eta_p}_{\Omega} + \LRp{\pd{\bm{u}}{t},\bm{\eta}_u}_{\Omega} - \LRp{\pd{U_h}{t},\eta}_{w,\Omega} &= \pd{}{t}\frac{1}{2}\LRp{\eta,\eta}_{\Omega} + \LRp{\pd{\delta}{t},\eta}_{\Omega}.  
\end{align*}
where 
\[
\LRp{\pd{\delta}{t},\eta}_{\Omega} = \LRp{\pd{\delta_p}{t},\eta_p}_{\Omega} + \LRp{\pd{\bm{\epsilon}_u}{t},\bm{\eta}_u}_\Omega.  
\]
Substituting these expressions into the error equation (\ref{eq:erroreq}) gives 
\begin{align*}
\pd{}{t}\frac{1}{2} (T_{c^2}^{-1}\eta,\eta)_\Omega + b(\eta,\eta) &\leq \LRb{\LRp{\pd{\delta}{t},\eta}_{\Omega}} + b(\eta,\eta) + \frac{C_\tau^2}{4} \sum_{k} \nor{\epsilon}_{L^2\LRp{\partial D^k}}^2.
\end{align*}
We eliminate factors of $\frac{1}{2}$ and $b(\eta,\eta)$ on both sides.  Then, integrating over $[0,T]$, applying Theorem~\ref{lemma:equiv}, and using Cauchy-Schwarz yields
\[
\frac{1}{c_{\max}} \nor{\eta}_{L^2\LRp{\Omega}}^2 \leq \int_0^T \nor{\eta}_{L^2\LRp{\Omega}}\nor{\pd{\delta}{t}}_{L^2\LRp{\Omega}}  + \frac{C_\tau^2}{2} \sum_{k} \nor{\epsilon}_{L^2\LRp{\partial D^k}}^2.
\]
The modified Gronwall inequality then yields a bound on $\nor{\eta}_{L^2\LRp{\Omega}}$
\begin{align*}
\nor{\eta}_{L^2\LRp{\Omega}} &\leq \tilde{C}\int_0^T \nor{\pd{\delta}{t}}_{L^2\LRp{\Omega}}  + \sup_{t'\in [0,T]} \sqrt{\int_0^T \frac{C_\tau^2}{2} \sum_{k} \nor{\epsilon}_{L^2\LRp{\partial D^k}}^2}\\
&\leq  CT \sup_{t'\in [0,T]} \LRp{ \nor{\pd{\delta}{t}}_{L^2\LRp{\Omega}}  + \sqrt{\sum_{k} \nor{\epsilon}_{L^2\LRp{\partial D^k}}^2}}.
\end{align*}
where $C$ depends on $ c_{\max}$ and the penalty parameters.  
The right hand side terms are then bounded using regularity assumptions.  The time derivative term is bounded using Theorem~\ref{lemma:mult}
\begin{align*}
\nor{\pd{\delta}{t}}_{L^2\LRp{\Omega}} &\leq \nor{\pd{\delta_p}{t}}_{L^2\LRp{\Omega}} + \nor{\pd{\bm{\epsilon}_u}{t}}_{L^2\LRp{\Omega}}\\
&\leq \nor{\pd{\delta_p}{t}}_{L^2\LRp{\Omega}} + Ch^{N+1} \nor{\pd{\bm{u}}{t}}_{W^{N+1,2}\LRp{\Oh}}\\
&\leq C\frac{c_{\max}}{c_{\min}} \nor{\frac{1}{c^2}}_{W^{N+1,\infty}\LRp{\Oh}} h^{N+1} \nor{\pd{}{t}(p,\bm{u})}_{W^{N+1,2}\LRp{\Oh}}.
\end{align*}
while the trace term is bounded using Theorem~\ref{thm:tracereg}
\[
\sqrt{\sum_k \nor{\epsilon}_{\partial D^k}^2} \leq \sqrt{C \sum_k h^{2N+1}\nor{\LRp{p,\bm{u}}}^2_{W^{N+1,2}(D^k)}} \leq Ch^{N+1/2} \nor{\LRp{p,\bm{u}}}_{W^{N+1,2}(\Oh)}
\]
Taking the supremum over $[0,T]$ and applying the triangle inequality to $U-U_h = \epsilon + \eta$ completes the proof.  
}
\end{proof}

\subsection{Local conservation}
\label{sec:conservation}

While standard DG methods are locally conservative, the use of the weight-adjusted mass matrix does not preserve local conservation of the same quantities conserved by the standard DG method.  However, Theorem~\ref{lemma:cons} gives an estimate which implies a higher order $O(h^{2N+2})$ convergence of the conservation error for smooth solutions.  Since conservation conditions for DG are recovered by testing with piecewise constant test functions \cite{ellis2014locally}, we define the local conservation error as the $M=0$ moment of the error between the standard DG and weight-adjusted DG inner products for polynomial $u$, summed over all elements $D^k$
\begin{align*}
&\sum_k \LRb{\LRp{\frac{1}{c^2}u,1}_{L^2\LRp{D^k}} - \LRp{u,1}_{T^{-1}_{c^2}}} \\
&\leq Ch^{2N+2}  \nor{c^2}^2_{L^{\infty}\LRp{\Oh}}\sup_k\nor{\frac{1}{c^2}}^2_{W^{N+1,\infty}\LRp{D^k}}{\sum_k \nor{u}_{W^{N+1,2}\LRp{\Oh}}}  .
\end{align*}
We note that the above bound depends on the regularity of both $c^2$ and the solution $u$.  As noted in the proof of Theorem~\ref{lemma:cons}, it is possible to restore local conservation by replacing $c^2$ with its degree $N$ polynomial projection or interpolant on each element, though this can introduce an error if $c^2$ is poorly approximated by $P^N\LRp{D^k}$.

Alternatively, it is also simple to restore conservation through a rank-one update to the mass matrix.  Let $\bm{e}$ be the vector of degrees of freedom representing a constant; then, we seek $\alpha \bm{vv}^T$ such that
\[
{\LRp{\bm{M}^k\LRp{\bm{M}^k_{c^2}}^{-1}\bm{M}^k + \alpha \bm{v}\bm{v}^T}\bm{e} - \bm{M}^k_{1/c^2}\bm{e}} = 0.
\]
This implies that $\bm{v}$ is the conservation error up to a scaling constant.  This constant can be determined as follows: define
\[
\bm{v} = \LRp{\bm{M}^k\LRp{\bm{M}^k_{c^2}}^{-1}\bm{M}^k - \bm{M}^k_{1/c^2}}\bm{e}
\]
Multiplying by $\bm{e}^T$ on the left gives 
\[
\bm{e}^T \bm{v} = \bm{e}^T\LRp{\bm{M}^k\LRp{\bm{M}^k_{c^2}}^{-1}\bm{M}^k- \bm{M}^k_{1/c^2}}\bm{e}  = -\alpha \LRp{\bm{v}^T\bm{e}}^2.
\]
Defining $\alpha = -{\rm sign}\LRp{\bm{v}^T\bm{e}}/\LRp{\bm{v}^T\bm{e}}$ then implies that the rank-one correction $\alpha \bm{v}\bm{v}^T$ is sufficient to enforce conservation.  Since $\LRp{\bm{v}^T\bm{e}}$ can be very small, $\alpha$ can be set to zero if  $\LRb{\bm{v}^T\bm{e}} \leq \delta \nor{\bm{v}}$ for some tolerance $\delta$ to ensure numerical stability.  The inverse of this conservative mass matrix can be applied using the Shermann-Morrison formula. Define $\tilde{\bm{v}} = \LRp{\bm{M}^k}^{-1}\bm{M}^k_{c^2}\LRp{\bm{M}^k}^{-1} \bm{v}$; assuming that ${1 + \alpha\bm{v}^T\tilde{\bm{v}}} \neq 0$,
\[
\LRp{\bm{M}^k\LRp{\bm{M}_{c^2}^k}^{-1}\bm{M}^k + \alpha \bm{v}\bm{v}^T}^{-1} = \LRp{\bm{M}^k}^{-1}\bm{M}^k_{c^2}\LRp{\bm{M}^k}^{-1} - \frac{\alpha\tilde{\bm{v}} \tilde{\bm{v}}^T}{1 + \alpha\bm{v}^T\tilde{\bm{v}}},
\]
requiring only $O(N^3)$ additional storage per element.  

For nonlinear hyperbolic problems with non-smooth solutions such as shocks, as a non-conservative scheme can lead to incorrect shock speeds \cite{leveque2002finite}.  The exact enforcement of local conservation is especially important in this context, since Theorem~\ref{lemma:cons} suggests that conservation errors depend otherwise on the regularity of $u$.

\section{Numerical examples}
\label{sec:num}
In this section, we give numerical examples confirming the estimates in Section~\ref{sec:estimates}, as well as numerical verification of convergence for the weight-adjusted DG method.  Numerical experiments are performed using a nodal DG method \cite{hesthaven2007nodal}; however, the weight-adjusted DG method is agnostic to the choice of basis used.  

\subsection{Comparisons between weighted and weight-adjusted inner products}

The DG method of Mercerat and Glinsky \cite{mercerat2015nodal} is energy stable with respect to the scaled $L^2$ norm induced by the inner product
\[
\int_{D^k} {w} p v + \bm{u}\cdot\bm{\tau} = \wip{p,v}{w} + \LRp{\bm{u},\bm{\tau}}_{L^2\LRp{D^k}},
\]
with $w = 1/c^2$.  The weight-adjusted DG method approximates this using the weight-adjusted inner product
\[
\int_{D^k} T_{1/w}^{-1}p v + \bm{u}\cdot\bm{\tau} = \waip{ p,v}{1/w} + \LRp{\bm{u},\bm{\tau}}_{L^2\LRp{D^k}}.
\]
We perform a numerical study to assess the quality of this approximation, which will influence how much the behavior of the weight-adjusted DG method will deviate from that of the standard DG method.  

Consider $u_{w,1}, u_{w,2}$ defined by the two scaled projection problems
\begin{align*}
\wip{u_{w,1},v}{w} &= \LRp{u,v}_{L^2\LRp{D^k}}, \qquad V \in V_h\LRp{D^k} \\
\waip{u_{w,2},v}{1/w} &= \LRp{u,v}_{L^2\LRp{D^k}}, \qquad V \in V_h\LRp{D^k}.
\end{align*}
$u_{w,1}, u_{w,2}$  approximate $u /w$.  If $u_{w,1}$ and $u_{w,2}$ are very close, the two projection problems are close to equivalent for that choice of $w$, and we expect the standard DG and weight-adjusted DG methods to behave similarly.  Polynomial expansion coefficients for $u_{w,1}, u_{w,2}$ are computed over each element by solving the matrix equations
\begin{align}
\bm{M}^k_{w}\bm{u}_{w,1} &= \bm{b} \label{eq:proj1}\\
\bm{M}^k \LRp{\bm{M}^k_{1/w}}^{-1}\bm{M}^k \bm{u}_{w,2} &= \bm{b}\label{eq:proj2},
\end{align}
where $\bm{b}_i = \int_{D^k} u \phi_i$.  We also examine convergence of $u_{w,3}$ to $uw$ as well, where coefficients for $u_{w,3}$ are computed by solving
\begin{align}
\LRp{\bm{M}^k \LRp{\bm{M}^k_{1/w}}^{-1}\bm{M}^k + \alpha \bm{v}\bm{v}^T}\bm{u}_{w,3} = \bm{b}. \label{eq:proj3}
\end{align}
Here, $\alpha$ and $\bm{v}$ define the rank-1 correction used to restore local conservation in Section~\ref{sec:conservation}.

\subsubsection{Regular solutions and weighting functions}
Table~\ref{table:projrates} shows $L^2$ errors for $\nor{{u_{w,1}} - u/{w}}_{L^2\LRp{\Omega}}$, $\nor{{u_{w,2}} - u/{w}}_{L^2\LRp{\Omega}}$, and $ \nor{{u_{w,3}} - u/w}_{L^2\LRp{\Omega}}$ on a sequence of uniform triangular meshes, with 
\[
u(x,y) = e^{x+y}, \qquad w(x,y) = 1 + \frac{1}{2}\sin(\pi x)\sin(\pi y).
\]
In all cases, the errors are very similar, though the error for $u_{w,1}$ (corresponding to the weighted $L^2$ inner product used in the standard DG method) appears to be consistently smaller than the errors for $u_{w,2}, u_{w,3}$.  Interestingly, the error for $u_{w,3}$, defined using the conservation-corrected mass matrix in (\ref{eq:proj3}), is smaller than the error for $u_{w,2}$ which does not include the rank-1 correction.

\begin{table}
\centering
\begin{tabular}{|c|c||c|c|c|c||c|}
\hline
&& $h = 1$ & $h = 1/2$, & $h = 1/4$ & $h = 1/8$ & Est.\ rate \\
\hline\hline
&$\nor{u_{w,1} - u/w}_{L^2}$ & 1.3920e-01 & 3.9460e-02 & 1.0207e-02 & 2.5739e-03 &1.922190 \\
$N = 1$&$\nor{u_{w,2} - u/w}_{L^2}$ & 1.4259e-01 & 3.9672e-02 & 1.0221e-02 & 2.5748e-03 &1.933027 \\
&$\nor{u_{w,3} - u/w}_{L^2}$ &1.4042e-01 & 3.9517e-02 & 1.0213e-02 & 2.5743e-03 &1.926034 \\
\hline\hline
&$\nor{u_{w,1} - u/w}_{L^2}$ & 3.1823e-02 & 4.5986e-03 & 5.9382e-04 & 7.4836e-05 &2.914944 \\
$N = 2$&$\nor{u_{w,2} - u/w}_{L^2}$ & 3.2454e-02 & 4.6209e-03 & 5.9455e-04 & 7.4859e-05 &2.923835 \\
&$\nor{u_{w,3} - u/w}_{L^2}$ & 3.2037e-02 & 4.6037e-03 & 5.9400e-04 & 7.4842e-05 &2.917925 \\
\hline\hline
   
&$\nor{u_{w,1} - u/w}_{L^2}$ & 6.2528e-03 & 4.0795e-04 & 2.5978e-05 & 1.6317e-06 &3.968489 \\
$N = 3$&$\nor{u_{w,2} - u/w}_{L^2}$ & 6.4703e-03 & 4.1129e-04 & 2.6034e-05 & 1.6326e-06 &3.983907 \\
&$\nor{u_{w,3} - u/w}_{L^2}$ & 6.2660e-03 & 4.0852e-04 & 2.5985e-05 & 1.6318e-06 &3.969530 \\

\hline\hline
&$\nor{u_{w,1} - u/w}_{L^2}$ & 7.9047e-04 & 2.8889e-05 & 9.3214e-07 & 2.9371e-08 &4.910195 \\
$N = 4$ &$\nor{u_{w,2} - u/w}_{L^2}$ & 7.9446e-04 & 2.8996e-05 & 9.3304e-07 & 2.9378e-08 &4.912661 \\
&$\nor{u_{w,3} - u/w}_{L^2}$ & 7.9433e-04 & 2.8902e-05 & 9.3226e-07 & 2.9377e-08 &4.912262 \\
\hline
\end{tabular}
\caption{$L^2$ errors and estimated rates of convergence for approximations $u_{w,1}, u_{w,2}, u_{w,3}$ of $u/w$ (defined by (\ref{eq:proj1}), (\ref{eq:proj2}), and (\ref{eq:proj3}) respectively) under uniform mesh refinement.  In this case, $u$ and $w$ are taken to be regular functions.}
\label{table:projrates}
\end{table}

\subsubsection{Solutions and weighting functions with decreased regularity}

It is worth noting that the results of Section~\ref{sec:estimates} involve terms $\nor{w}_{W^{N+1,\infty}},\nor{1/w}_{W^{N+1,\infty}}$ which depend on the regularity of $w$ over $D^k$.  For this reason, we expect the approximations $u_{w,1}, u_{w,2}, u_{w,3} \approx u/w$ resulting from the solutions of (\ref{eq:proj2}) and (\ref{eq:proj3}) to degenerate in quality as $w$ becomes less regular.  To test this, we take 
\[
c^2(x,y) = 1 + \sqrt{x^2 + y^2 + a}, \qquad a \in [0,\infty).
\]
which produces a non-differentiable cone as $a\rightarrow 0$.\footnote{Since typical quadratures are designed for more regular integrands, we double the quadrature strength when evaluating integrands with $a \approx 0$.  One-dimensional numerical experiments which compare increased quadrature strength with adaptive quadrature achieve qualitatively similar results.  Irregular weighting functions may also be dealt with using techniques from immersed DG methods \cite{adjerid2007higher}.}  Figure~\ref{fig:nonsmoothrates} shows the effect decreasing regularity of $w$ on the convergence of $u_{w,1}, u_{w,2}, u_{w,3}$ for $N = 3$.  While we do observe increases in error as $w$ loses regularity, we still observe that $u_{w,1}, u_{w,2}, u_{w,3}$ all behave very similarly independently of the regularity of $w$.  Along with the results of Theorem~\ref{lemma:mult}, this implies that the behavior of the weight-adjusted DG method should be very close to that of the standard DG method for both smooth and irregular $w$.  Interestingly, as $w$ approaches a non-differentiable function, the convergence of $u_{w,1}, u_{w,2}$, and $u_{w,3}$ to $u/w$ reduces to $O(h^2)$ for all orders $N$ tested.

\begin{figure}
\centering
\subfloat[$N=3$]{
\begin{tikzpicture}
\begin{loglogaxis}[
	legend cell align=left,
	width=.475\textwidth,
    xlabel={Mesh size $h$},
    ylabel={$L^2$ error},
    xmin=.01, xmax=1.5,
    ymin=1e-10, ymax=5e-3,
    legend pos=south east,
    xmajorgrids=true,
    ymajorgrids=true,
    grid style=dashed,
] 
\addplot[color=magenta,mark=*,semithick, mark options={fill=markercolor}]
coordinates{(0.5,0.000811841)(0.25,5.2479e-05)(0.125,3.34793e-06)(0.0625,2.10171e-07)};
\addplot[color=magenta,mark=square*,semithick, mark options={fill=markercolor}]
coordinates{(0.5,0.000815045)(0.25,5.25257e-05)(0.125,3.34867e-06)(0.0625,2.10183e-07)};
\addplot[color=magenta,mark=x,semithick, mark options={fill=markercolor}]
coordinates{(0.5,0.000813198)(0.25,5.25014e-05)(0.125,3.34828e-06)(0.0625,2.10177e-07)};
\node at (axis cs:.03,2.1e-07) {$a = 10^{-1}$};

\addplot[color=black,mark=*,semithick, mark options={fill=markercolor}]
coordinates{(0.5,0.00118692)(0.25,0.000190298)(0.125,1.821e-05)(0.0625,1.27298e-06)};
\addplot[color=black,mark=square*,semithick, mark options={fill=markercolor}]
coordinates{(0.5,0.00119612)(0.25,0.000190761)(0.125,1.82156e-05)(0.0625,1.27306e-06)};
\addplot[color=black,mark=x,semithick, mark options={fill=markercolor}]
coordinates{(0.5,0.00118848)(0.25,0.000190313)(0.125,1.82102e-05)(0.0625,1.27299e-06)};
\node at (axis cs:.03,1.27e-06) {$a = 10^{-2}$};

\addplot[color=red,mark=*,semithick, mark options={fill=markercolor}]
coordinates{(0.5,0.00123752)(0.25,0.000276383)(0.125,6.84113e-05)(0.0625,1.03818e-05)};
\addplot[color=red,mark=square*,semithick, mark options={fill=markercolor}]
coordinates{(0.5,0.00124408)(0.25,0.00027695)(0.125,6.84631e-05)(0.0625,1.03834e-05)};
\addplot[color=red,mark=x,semithick, mark options={fill=markercolor}]
coordinates{(0.5,0.00123967)(0.25,0.000276403)(0.125,6.84115e-05)(0.0625,1.03818e-05)};
\node at (axis cs:.03,.75e-05) {$a = 10^{-3}$};

\addplot[color=blue,mark=*,semithick, mark options={fill=markercolor}]
coordinates{(0.5,0.0012956)(0.25,0.000301894)(0.125,7.66855e-05)(0.0625,1.9284e-05)};
\addplot[color=blue,mark=square*,semithick, mark options={fill=markercolor}]
coordinates{(0.5,0.0013018)(0.25,0.000302283)(0.125,7.67179e-05)(0.0625,1.92874e-05)};
\addplot[color=blue,mark=x,semithick, mark options={fill=markercolor}]
coordinates{(0.5,0.00129769)(0.25,0.000301911)(0.125,7.66858e-05)(0.0625,1.92853e-05)};
\node at (axis cs:.03,2.25e-05) {$a = 10^{-4}$};

\legend{$u_{w,1}$, $u_{w,2}$, $u_{w,3}$}
\end{loglogaxis}
\end{tikzpicture}
}
\subfloat[$N=4$]{
\begin{tikzpicture}
\begin{loglogaxis}[
	legend cell align=left,
	width=.475\textwidth,
    xlabel={Mesh size $h$},
    ylabel={$L^2$ error},
    xmin=.01, xmax=1.5,
    ymin=1e-10, ymax=5e-3,
    legend pos=south east,
    xmajorgrids=true,
    ymajorgrids=true,
    grid style=dashed,
] 
\addplot[color=magenta,mark=*,semithick, mark options={fill=markercolor}]
coordinates{(0.5,7.68472e-05)(0.25,3.6451e-06)(0.125,1.25564e-07)(0.0625,4.00732e-09)};
\addplot[color=magenta,mark=square*,semithick, mark options={fill=markercolor}]
coordinates{(0.5,7.71628e-05)(0.25,3.64757e-06)(0.125,1.25583e-07)(0.0625,4.00747e-09)};
\addplot[color=magenta,mark=x,semithick, mark options={fill=markercolor}]
coordinates{(0.5,7.69985e-05)(0.25,3.6457e-06)(0.125,1.25573e-07)(0.0625,4.00743e-09)};
\node at (axis cs:.03,4.e-09) {$a = 10^{-1}$};

\addplot[color=black,mark=*,semithick, mark options={fill=markercolor}]
coordinates{(0.5,0.000379655)(0.25,4.1344e-05)(0.125,2.53271e-06)(0.0625,1.42708e-07)};
\addplot[color=black,mark=square*,semithick, mark options={fill=markercolor}]
coordinates{(0.5,0.00038317)(0.25,4.1436e-05)(0.125,2.53325e-06)(0.0625,1.42717e-07)};
\addplot[color=black,mark=x,semithick, mark options={fill=markercolor}]
coordinates{(0.5,0.000379681)(0.25,4.13447e-05)(0.125,2.53276e-06)(0.0625,1.42708e-07)};
\node at (axis cs:.03,1.42e-07) {$a = 10^{-2}$};

\addplot[color=red,mark=*,semithick, mark options={fill=markercolor}]
coordinates{(0.5,0.000461945)(0.25,0.000126468)(0.125,2.25822e-05)(0.0625,1.63882e-06)};
\addplot[color=red,mark=square*,semithick, mark options={fill=markercolor}]
coordinates{(0.5,0.000465012)(0.25,0.000126831)(0.125,2.26025e-05)(0.0625,1.639e-06)};
\addplot[color=red,mark=x,semithick, mark options={fill=markercolor}]
coordinates{(0.5,0.000462034)(0.25,0.000126466)(0.125,2.25821e-05)(0.0625,1.63882e-06)};
\node at (axis cs:.03,1.5e-06) {$a = 10^{-3}$};

\addplot[color=blue,mark=*,semithick, mark options={fill=markercolor}]
coordinates{(0.5,0.000483624)(0.25,0.000126702)(0.125,3.34245e-05)(0.0625,8.06305e-06)};
\addplot[color=blue,mark=square*,semithick, mark options={fill=markercolor}]
coordinates{(0.5,0.000485939)(0.25,0.000126923)(0.125,3.34475e-05)(0.0625,8.06499e-06)};
\addplot[color=blue,mark=x,semithick, mark options={fill=markercolor}]
coordinates{(0.5,0.000483704)(0.25,0.000126703)(0.125,3.34244e-05)(0.0625,8.06305e-06)};
\node at (axis cs:.03,8.5e-06) {$a = 10^{-4}$};

\legend{$u_{w,1}$, $u_{w,2}$, $u_{w,3}$}
\end{loglogaxis}
\end{tikzpicture}
}
\caption{Convergence of $L^2$ errors for solutions $u_{w,1}, u_{w,2}, u_{w,3}$ of (\ref{eq:proj1}), (\ref{eq:proj2}), (\ref{eq:proj3}) under uniform mesh refinement for $N = 3,4$.  In this case, $w$ is taken to be a function whose regularity decreases as $a\rightarrow 0$.   }
\label{fig:nonsmoothrates}
\end{figure}
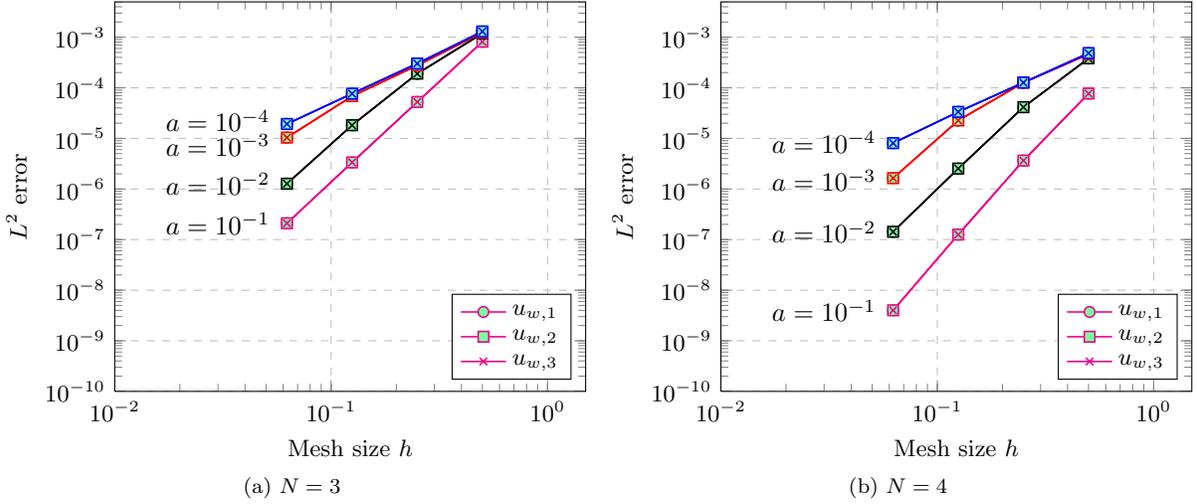


\subsection{Local conservation errors}

Section~\ref{sec:conservation} discusses the fact that the weight-adjusted DG method does not locally conserve the same quantities conserved by the standard DG method.  However, estimates show that for sufficiently regular $u$ and $w$, the conservation error converges at $O(h^{2N+2})$.  

\subsubsection{Regular solutions and weighting functions}

We test this first for regular $u,w$ by taking  
\[
u(x,y) = e^{x+y}, \qquad w(x,y) = 1 + \frac{1}{2}\sin(\pi x)\sin(\pi y).
\]
and computing the conservation errors for $u_{w,2}, u_{w,3}$.  For $u_{w,2}$, this error is defined as 
\[
\sum_k \LRp{\int_{D^k} \frac{u_{w,1}}{c^2} - \int_{D^k}T_{1/w}^{-1} u_{w,2}}, 
\]
for $u_{w,1}, u_{w,2}$ as defined in (\ref{eq:proj1}),(\ref{eq:proj2}), and (\ref{eq:proj3}), respectively.  For $u_{w,3}$ since the conservation-corrected mass matrix does not have a clear inner product analogue, we measure the conservation error via
\[
\sum_k \bm{e}^T\bm{M}_{1/c^2}^k\bm{u}_{w,1} - \bm{e}^T\bm{M}^k\LRp{\bm{M}^{k}_{c^2}}^{-1} \bm{M}^k\bm{u}_{w,3},
\]
where $\bm{e}$ are the polynomial expansion coefficients for the constant $1$ over $D^k$.  

In all experiments, $\alpha$ is set to zero if $\LRb{\bm{v}^T\bm{e}} \leq \delta \nor{\bm{v}}$ for $\delta = 10^{-8}$.  Table~\ref{table:smoothuw} shows the conservation errors
\[
\LRb{\overline{u_{w,1}/c^2} - \overline{T^{-1}_{c^2}u_{w,2}}}, \qquad \LRb{\overline{u_{w,1}/c^2} - \overline{T^{-1}_{c^2}u_{w,3}}}
\]
for $u_{w,2}$ and $u_{w,3}$ respectively.  The estimated rate of convergence for $u_{w,2}$ is also reported.  As predicted in Section~\ref{sec:conservation}, the conservation error for $u_{w,2}$ is observed to converge at a rate of $O(h^{2N+2})$, while $u_{w,3}$ is observed to reduce conservation error to machine precision values.  

\begin{table}
\centering
\begin{tabular}{|c|c||c|c|c|c||c|}
\hline
&& $h = 1$ & $h = 1/2$, & $h = 1/4$ & $h = 1/8$ & Est.\ rate \\
\hline
$N = 1$& $\LRb{\overline{u_{w,1}/c^2}-\overline{u_{w,2}/c^2}}$ & 9.5935e-03 & 7.9155e-04 & 5.2323e-05 & 3.2990e-06 &3.953251 \\
$N = 1$& $\LRb{\overline{u_{w,1}/c^2}-\overline{u_{w,3}/c^2}}$ & 2.7409e-16 & 2.7712e-16 & 2.5468e-16 & 2.5320e-16 & \\
\hline\hline
$N = 2$& $\LRb{\overline{u_{w,1}/c^2}-\overline{u_{w,2}/c^2}}$  & 4.4236e-04 & 1.4430e-05 & 2.3578e-07 & 3.7821e-09 &5.948822 \\
$N = 2$& $\LRb{\overline{u_{w,1}/c^2}-\overline{u_{w,3}/c^2}}$  & 2.9046e-16 & 3.1423e-16 & 3.3770e-16 & 3.4679e-16 & \\
\hline\hline
$N = 3$& $\LRb{\overline{u_{w,1}/c^2}-\overline{u_{w,2}/c^2}}$  & 7.7600e-05 & 3.5645e-07 & 1.5276e-09 & 6.2161e-12 &7.903656 \\
$N = 3$& $\LRb{\overline{u_{w,1}/c^2}-\overline{u_{w,3}/c^2}}$  & 3.6527e-16 & 2.9679e-16 & 3.5446e-16 & 3.5605e-16 & \\
\hline\hline
$N = 4$& $\LRb{\overline{u_{w,1}/c^2}-\overline{u_{w,2}/c^2}}$  & 2.5627e-06 & 7.8864e-09 & 1.2094e-11 & 1.3714e-14 &9.566707 \\
$N = 4$& $\LRb{\overline{u_{w,1}/c^2}-\overline{u_{w,3}/c^2}}$  & 3.2904e-16 & 2.9661e-16 & 3.2352e-16 & 3.3249e-16 & \\
\hline
\end{tabular}
\caption{Conservation errors at different orders of approximation $N$ under uniform mesh refinement for solutions $u_{w,2}, u_{w,3}$ to (\ref{eq:proj2}), (\ref{eq:proj3}).  In this case, $u,w$ are taken to be regular functions. Estimated orders of convergence are also reported for $u_{w,2}$.  }
\label{table:smoothuw}
\end{table}

\subsubsection{Solutions and weighting functions with decreased regularity}
We also investigate how the regularity of $u,w$ affect local conservation errors.  We consider $u,w$ given both a by smooth exponential and a regularized cone
\begin{align*}
u(x,y) = e^{x+y}, \qquad w(x,y) = 1 + \sqrt{x^2 + y^2 + a}, \quad a \in [0,\infty),\\
u(x,y) = 1 + \sqrt{x^2 + y^2 + a}, \quad a \in [0,\infty), \qquad w(x,y) = e^{x+y}.
\end{align*}
Figure~\ref{fig:conserrregularity} shows the effects of decreasing regularity of $w$ and $u$ separately on the conservation errors.  Decreasing regularity of $w$ is observed to reduce convergence of conservation errors to $O(h^4)$. Interestingly, only decreasing the regularity of $u$ affects conservation errors far less than only decreasing the regularity of $w$, suggesting that the bound in Theorem~\ref{lemma:cons} may not be sharp.  Additionally, for less regular $u$ and discontinuous $u$, we observe numerically that conservation errors decrease at a rate of $O(h^{N+2})$.  Both of these behaviors are better than expected from Theorem~\ref{lemma:cons}, and suggest that conservation errors do not depend strongly on the regularity of $u$.  

\begin{figure}
\centering
\subfloat[Conservation errors for less-regular $w$]{
\begin{tikzpicture}
\begin{loglogaxis}[
	legend cell align=left,
	width=.49\textwidth,
    xlabel={Mesh size $h$},
    xmin=.01, xmax=1.5,
    ymin=1e-15, ymax=1e-5,
    legend pos=south east,
    xmajorgrids=true,
    ymajorgrids=true,
    grid style=dashed,
] 

\addplot[color=blue,mark=*,semithick, mark options={fill=markercolor}]
coordinates{(0.5,1.65584e-07)(0.25,8.21601e-10)(0.125,3.86473e-12)(0.0625,1.61111e-14)};
\node at (axis cs:.03,1.6e-14) {$a = 10^{-1}$};

\addplot[color=red,mark=square*,semithick, mark options={fill=markercolor}]
coordinates{(0.5,1.37583e-06)(0.25,5.34628e-08)(0.125,4.48005e-10)(0.0625,2.10452e-12)};
\node at (axis cs:.03,2.1e-12) {$a = 10^{-2}$};

\addplot[color=black,mark=triangle*,semithick, mark options={fill=markercolor}]
coordinates{(0.5,1.65946e-06)(0.25,1.0755e-07)(0.125,5.90813e-09)(0.0625,1.26104e-10)};
\node at (axis cs:.03,1e-10) {$a = 10^{-3}$};

\addplot[color=magenta,mark=diamond*,semithick, mark options={fill=markercolor}]
coordinates{(0.5,1.92368e-06)(0.25,1.24548e-07)(0.125,7.16551e-09)(0.0625,4.17446e-10)};
\node at (axis cs:.03,4.6e-10) {$a = 10^{-4}$};

\end{loglogaxis}
\end{tikzpicture}
}
\subfloat[Conservation errors for less-regular $u$]{
\begin{tikzpicture}
\begin{loglogaxis}[
	legend cell align=left,
	width=.49\textwidth,
    xlabel={Mesh size $h$},
    xmin=.01, xmax=1.5,
    ymin=1e-15, ymax=1e-5,
    legend pos=south east,
    xmajorgrids=true,
    ymajorgrids=true,
    grid style=dashed,
] 

\addplot[color=blue,mark=*,semithick, mark options={fill=markercolor}]
coordinates{(0.5,1.93878e-07)(0.25,7.97778e-10)(0.125,3.16898e-12)(0.0625,1.34386e-14)};
\node at (axis cs:.03,.5e-14) {$a = 10^{-1}$};

\addplot[color=red,mark=square*,semithick, mark options={fill=markercolor}]
coordinates{(0.5,2.12529e-07)(0.25,1.28147e-09)(0.125,5.42966e-12)(0.0625,2.17705e-14)};
\node at (axis cs:.03,1.5e-14) {$a = 10^{-2}$};

\addplot[color=black,mark=triangle*,semithick, mark options={fill=markercolor}]
coordinates{(0.5,1.91045e-07)(0.25,1.29447e-09)(0.125,8.90727e-12)(0.0625,4.98508e-14)};
\node at (axis cs:.03,4.5e-14) {$a = 10^{-3}$};

\addplot[color=magenta,mark=diamond*,semithick, mark options={fill=markercolor}]
coordinates{(0.5,1.84712e-07)(0.25,1.16594e-09)(0.125,7.75168e-12)(0.0625,5.98593e-14)};
\node at (axis cs:.03,1.5e-13) {$a = 10^{-4}$};


\end{loglogaxis}
\end{tikzpicture}
}
\caption{Convergence of conservation errors for solution $u_{w,2}$ to (\ref{eq:proj2}) under uniform mesh refinement.  In this case, $u$, $w$ are taken to be functions whose regularity decreases as $a\rightarrow 0$.  Results are shown for $N = 3$. }
\label{fig:conserrregularity}
\end{figure}
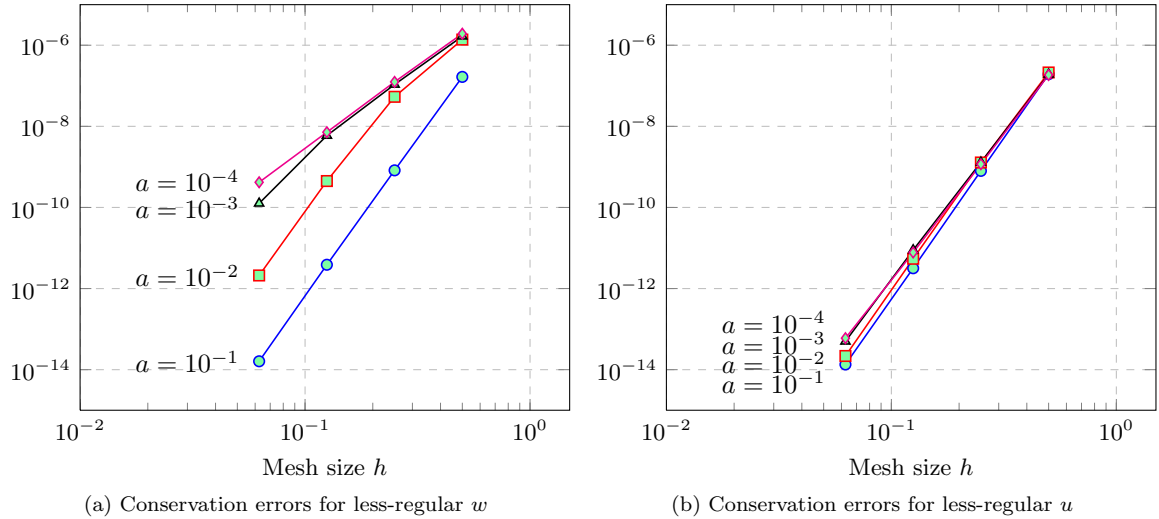

\subsection{Convergence of DG for heterogeneous wavespeed}

In this section, we examine the convergence of high order standard and weight-adjusted DG methods to manufactured and reference solutions under a wavespeed which varies spatially with each element.  

\subsubsection{Convergence to a manufactured solution}
For the acoustic wave equation with smoothly varying wavespeed, there are few analytic reference solutions in higher dimensions.  For this reason the method of manufactured solutions is often used to analyze the convergence of methods for wave propagation in heterogeneous media \cite{castro2010seismic, mercerat2015nodal}.  The method of manufactured solutions chooses expressions for $p, \bm{u}$ and determines a source term $f\LRp{\bm{x},t}$ such that the inhomogeneous acoustic wave equations
\begin{align}
\frac{1}{c^2}\pd{p}{t}{} + \Div u &= f \nonumber\\
\rho\pd{\bm{u}}{t}{} + \Grad p &= 0,
\label{eq:dgconv}
\end{align}
have solution $p,\bm{u}$.  Table~\ref{table:manusol} shows the convergence of $L^2$ errors for both standard DG and weight-adjusted DG on a sequence of 2D uniform triangular meshes for
\[
c^2(x,y) = 1 + \frac{1}{2}\sin\LRp{\pi x} \sin\LRp{\pi y}, \qquad p(x,y,t) = \cos\LRp{\frac{\pi}{2} x}\cos\LRp{\frac{\pi}{2} y}\cos\LRp{\frac{\pi}{2}\sqrt{2}t}.
\] 
A triangular quadrature from Xiao and Gimbutas \cite{xiao2010quadrature} (chosen to be exact for polynomials up to degree $3N$) is used to compute both the weighted and weight-adjusted mass matrices for standard DG and the application of the weighted-adjusted mass matrix for weight-adjusted DG.  We do not correct the mass matrix with $\alpha \bm{v}\bm{v}^T$ to enforce local conservation in the following numerical experiments.

\begin{table}
\centering                                                                                   
\subfloat[Standard DG $L^2$ errors, manufactured solution]{
\begin{tabular}{|c||c|c|c|c|}
\hline                                                   
$N$ & $h = 1$ & $h = 1/2$ & $h = 1/4$ & $h = 1/8$\\ 
\hline                                                   
$ 1$ &   2.13e-01 & 6.25e-02 & 1.64e-02 & 4.19e-03 \\
\hline                                                   
$ 2$ &  3.01e-02 & 3.60e-03 & 4.21e-04 & 5.07e-05 \\
\hline                                                   
$ 3$ &   6.10e-03 & 3.33e-04 & 2.04e-05 & 1.22e-06 \\
\hline                                                   
$ 4$ &  6.61e-04 & 2.12e-05 & 6.39e-07 & 1.94e-08 \\
\hline 
\end{tabular}
}
\subfloat[Weight-adjusted DG $L^2$ errors, manufactured solution]{
\begin{tabular}{|c||c|c|c|c|}
\hline                                                   
$N$ & $h = 1$ & $h = 1/2$ & $h = 1/4$ & $h = 1/8$\\ 
\hline                                                   
$ 1$ &  2.05e-01 & 5.99e-02 & 1.62e-02 & 4.18e-03 \\
\hline                                                   
$ 2$ &  2.89e-02 & 3.54e-03 & 4.18e-04 & 5.07e-05 \\
\hline                                                   
$ 3$ &   8.69e-03 & 3.47e-04 & 2.03e-05 & 1.22e-06 \\
\hline                                                   
$ 4$ & 1.09e-03 & 2.27e-05 & 6.30e-07 & 1.93e-08 \\
\hline 
\end{tabular}
}\\

\subfloat[Standard DG $L^2$ errors, reference solution]{
\begin{tabular}{|c||c|c|c|c|}
\hline                                                   
$N$ & $h = 1$ & $h = 1/2$ & $h = 1/4$ & $h = 1/8$\\ 
\hline                                                   
$ 1$ &   2.48e-01 & 7.58-02 & 1.69e-02 & 4.46e-03 \\
\hline                                                   
$ 2$ &  5.95e-02 & 9.95e-03 & 1.10e-03 & 1.22e-04 \\
\hline                                                   
$ 3$ &   2.29e-02 & 1.98e-03 & 9.52e-05 & 6.56e-06 \\
\hline                                                   
$ 4$ &  4.90e-03 & 3.01e-04 & 1.78e-05 & 7.27e-07 \\
\hline 
\end{tabular}
}
\subfloat[Weight-adjusted DG $L^2$ errors, reference solution]{
\begin{tabular}{|c||c|c|c|c|}
\hline                                                   
$N$ & $h = 1$ & $h = 1/2$ & $h = 1/4$ & $h = 1/8$\\ 
\hline                                                
$ 1$ &  2.50e-01 & 7.72e-02 & 1.69e-02 & 4.47e-03 \\
\hline                                                   
$ 2$ &  6.09e-02 & 1.02e-02 & 1.10e-03 & 1.22e-04 \\
\hline                                                   
$ 3$ &   1.98e-02 & 1.98e-03 & 9.52e-05 & 6.56e-06 \\
\hline                                                   
$ 4$ & 4.64e-03 & 3.02e-04 & 1.78e-05 & 7.28e-07 \\
\hline 
\end{tabular}
}

\caption{Convergence of $L^2$ errors for standard and weight-adjusted DG solutions to (\ref{eq:dgconv}) for manufactured and reference solutions at time $T = 1$ under uniform triangular mesh refinement.}
\label{table:manusol}
\end{table}

\subsubsection{Convergence to a reference solution}

We also compare the convergence of DG for heterogeneous media in a more realistic setting by computing the error with respect to a fine-grid reference solution computed using a spectral method over the bi-unit square $[-1,1]^2$ with $N = 100$.  The timestep for the reference solution is taken sufficiently small as to make temporal errors negligible.  The same wavespeed $c$ used for the manufactured solution is used again for the manufactured solution, with an initial condition of $p(x,y,0) = \cos\LRp{\frac{\pi}{2} x}\cos\LRp{\frac{\pi}{2} y}$.  Table~\ref{table:rates} shows estimated rates of convergence for both standard and weight-adjusted DG.  For both methods, rates of convergence between  $O(h^{N+1/2})$ and  $O(h^{N+1})$ are observed for $N = 1,\ldots,4$. In all cases, the errors for the standard and weight-adjusted DG methods are nearly identical for on all but the coarsest mesh.

\begin{table}
\centering
\subfloat[Rates of convergence to manufactured solution]{
\begin{tabular}{|c||c|c|c|c|}
\hline                                                   
 & $N = 1$ & $N = 2$ & $N = 3$ & $N = 4$\\ 
\hline                                                   
DG&       1.9220  &     3.0752     &      4.0440      &      5.0446    \\
\hline                                                   
WADG &  1.9211 &     3.0629     &    4.0752    &   5.0990\\
\hline 
\end{tabular}
}
\subfloat[Rates of convergence to reference solution]{
\begin{tabular}{|c||c|c|c|c|}
\hline                                                   
 & $N = 1$ & $N = 2$ & $N = 3$ & $N = 4$\\ 
\hline                                                   
DG &   1.8256  &      3.1796  &    3.8589    &  4.6171         \\
\hline                                                   
WADG &  1.8425 &      3.1807  & 3.8583   & 4.6128 \\
\hline 
\end{tabular}
}
\caption{Estimated rates of convergence of standard and weight-adjusted DG solutions of (\ref{eq:dgconv}) to both manufactured and reference solutions at $T=1$.}
\label{table:rates}
\end{table}

Finally, Figure~\ref{fig:snap} shows a comparison of the standard and weight-adjusted DG method for the discontinuous wavespeed
\begin{align}
c^2(x,y) = \begin{cases}
1 + \frac{1}{2}\sin\LRp{2\pi x}\sin\LRp{2\pi y}, \qquad y \leq 0\\
2 + \frac{1}{2}\sin\LRp{2\pi x}\sin\LRp{2\pi y}, \qquad y > 0.
\end{cases}
\label{eq:cdisc}
\end{align}
The initial condition is taken to be a initial Gaussian pulse centered at $\LRp{0,1/4}$.  For $N = 4$, $h = 1/8$, and $T = .5$, both the standard DG and weight-adjusted DG solutions are indistinguishable.

\begin{figure}
\centering
\subfloat[Standard DG]{
\includegraphics[width=.375\textwidth]{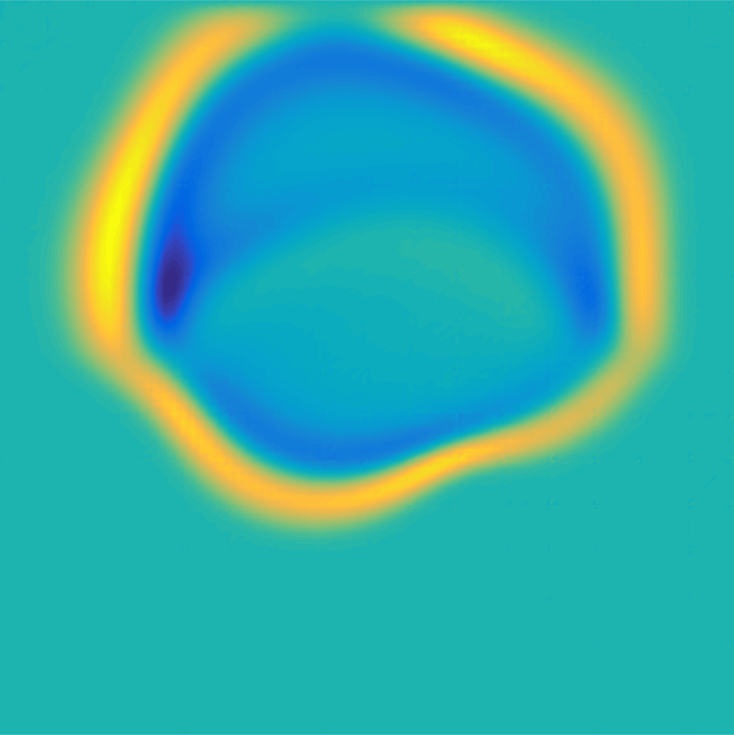}
}
\vspace{1em}
\subfloat[Weight-adjusted DG]{
\includegraphics[width=.375\textwidth]{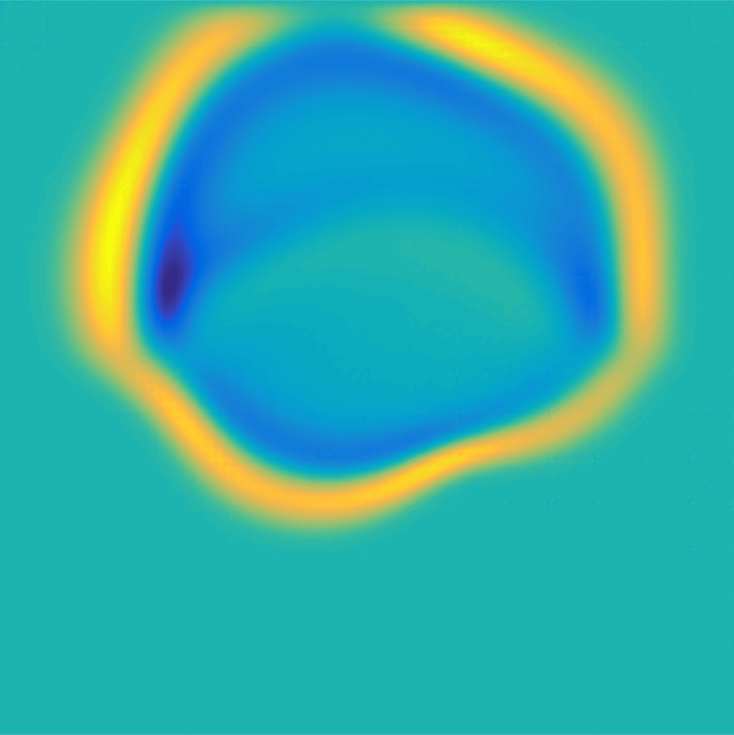}
}
\caption{Snapshot of from standard and weight-adjusted DG solutions of the acoustic wave equation with $c^2$ defined by (\ref{eq:cdisc}).  The order of approximation is $N = 4$, and the final time is taken to be $T=.5$.  The initial condition is a Gaussian pulse centered around $(0,.25)$, and $c^2$ varies spatially with a discontinuity at $y = 0$. }
\label{fig:snap}
\end{figure}

\subsection{Effect of reduced quadrature}

It was noted in \cite{warburton2013low} that, for the LSC-DG formulation, it is possible to reduce the order of the quadrature used to evaluate the variational formulation significantly without compromising the estimated order of convergence implied by theory.  This can be attributed to two facts: first, that stability of the LSC-DG formulation does not depend on quadrature strength, and secondly, that errors for a degree $2N$ quadrature rule are of the same order as the discretization error.  

Similarly, the weight-adjusted DG method is energy stable so long as the weight-adjusted inner product (computed using quadrature) induces a norm.  Numerical experiments indicate that quadrature degrees which integrate degree $2N+1$ polynomials exactly rule are sufficient, and that increasing quadrature strength beyond this degree does not offer any significant advantages.  Table~\ref{table:quad} shows the effect of varying the quadrature strength $N_q$ from degree $2N-1$ to $3N$ for an $N = 4$ discretization.  While the error decreases very slightly by increasing the degree of quadrature from $2N-1$ to $2N$ or $2N+1$, no significant change in error is observed by increasing the degree of quadrature beyond than $2N+1$.  Results are not reported for quadratures of lower degree than $2N-1$, as numerically singular mass matrices are generated.

\begin{table}
\centering
\subfloat[Manufactured solution]{
\begin{tabular}{|c||c|c|}
\hline
$N_q$ & $L^2$ error (DG) & $L^2$ error (WADG) \\
\hline
7 &   1.0102e-07  & 2.9122e-08\\ 
\hline
8 &   2.1710e-08 & 2.1709e-08\\
\hline
9 & 1.9548e-08  &1.9544e-08\\
\hline
10 & 1.9443e-08 & 1.9544e-08\\
\hline
11 & 1.9443e-08  &1.9324e-08\\
\hline
12 & 1.9443e-08  & 1.9324e-08\\
\hline
\end{tabular}
}
\subfloat[Reference solution]{
\begin{tabular}{|c||c|c|}
\hline
$N_q$ & $L^2$ error (DG) & $L^2$ error (WADG) \\
\hline
7 &      7.7932e-07 & 8.3296e-07\\
\hline
8 &      7.6739e-07 &  7.6732e-07 \\
\hline
9 &     7.6568e-07 & 7.6553e-07 \\
\hline
10 &    7.6504e-07 &  7.6410e-07\\
\hline
11 &    7.6410e-07 & 7.6502e-07\\
\hline
12 &      7.6501e-07 & 7.6412e-07 \\
\hline
\end{tabular}
}
\caption{Effect of varying quadrature degree from $2N-1$ to $3N$ on $L^2$ errors for the standard and weight-adjusted DG solution of (\ref{eq:dgconv}).  Results are for $N = 4$ and a uniform $h = 1/8$ mesh.}
\label{table:quad}
\end{table}

\section{Conclusions and future work}

This work introduces a weight-adjusted DG (WADG) method for the simulation of wave propagation in heterogeneous media which is both provably energy stable and high order accurate for heterogeneous media with wavespeeds which are locally smooth over each element.  Additionally, the implementation of the WADG method is non-invasive, and can be incorporated into a DG code for wave propagation in isotropic media with only minor modifications.  

The WADG method relies on an approximation of the weighted mass matrix by an equivalent weight-adjusted mass matrix, which implies that unlike the DG method, the method is no longer Galerkin consistent or locally conservative (for non-polynomial wavespeeds).  However, the method is shown to be asymptotically consistent and high order accurate, while conservation errors are shown to superconverge at rate $O(h^{2N+2})$ for smooth solutions and wavespeeds.  Finally, numerical experiments also indicate that a low-rank correction to the mass matrix can be used to recover exact conservation properties in the case of non-polynomial wavespeed.  

Future work will involve the efficient implementation of the WADG method on GPUs for more realistic velocity models in three dimensions, as well as the extension of the WADG method to curvilinear meshes, which can be used to the control interface errors resulting from the approximation of non-planar interfaces by piecewise planar surfaces \cite{wang2009discontinuous}.  We note that while the implementation of the WADG method for curvilinear meshes is relatively similar, the analysis differs from the case of affine elements.  

\section{Acknowledgments} 

The authors thank TOTAL for permission to publish.  JC and TW are funded by a grant from TOTAL E\&P Research and Technology USA.  


\bibliographystyle{unsrt}
\bibliography{total}{}

\begin{thebibliography}{10}

\bibitem{virieux2011review}
Jean Virieux, Henri Calandra, and Ren{\'e}-{\'E}douard Plessix.
\newblock A review of the spectral, pseudo-spectral, finite-difference and
  finite-element modelling techniques for geophysical imaging.
\newblock {\em Geophysical Prospecting}, 59(5):794--813, 2011.

\bibitem{symes2009interface}
William~W Symes and Tetyana Vdovina.
\newblock Interface error analysis for numerical wave propagation.
\newblock {\em Computational Geosciences}, 13(3):363--371, 2009.

\bibitem{komatitsch1998spectral}
Dimitri Komatitsch and Jean-Pierre Vilotte.
\newblock The spectral element method: an efficient tool to simulate the
  seismic response of {2D and 3D} geological structures.
\newblock {\em Bulletin of the seismological society of America},
  88(2):368--392, 1998.

\bibitem{chin1999higher}
MJS Chin-Joe-Kong, WA~Mulder, and M~Van~Veldhuizen.
\newblock Higher-order triangular and tetrahedral finite elements with mass
  lumping for solving the wave equation.
\newblock {\em Journal of Engineering Mathematics}, 35(4):405--426, 1999.

\bibitem{cohen2001higher}
Gary Cohen, Patrick Joly, Jean~E Roberts, and Nathalie Tordjman.
\newblock Higher order triangular finite elements with mass lumping for the
  wave equation.
\newblock {\em SIAM Journal on Numerical Analysis}, 38(6):2047--2078, 2001.

\bibitem{zhebel2014comparison}
Elena Zhebel, Sara Minisini, Alexey Kononov, and Wim~A Mulder.
\newblock A comparison of continuous mass-lumped finite elements with finite
  differences for {3-D} wave propagation.
\newblock {\em Geophysical Prospecting}, 62(5):1111--1125, 2014.

\bibitem{dumbser2006arbitrary}
Michael Dumbser and Martin K{\"a}ser.
\newblock An arbitrary high-order discontinuous {Galerkin} method for elastic
  waves on unstructured meshes - {II}. the three-dimensional isotropic case.
\newblock {\em Geophysical Journal International}, 167(1):319--336, 2006.

\bibitem{dumbser2007arbitrary}
Michael Dumbser, Martin K{\"a}ser, and Eleuterio~F Toro.
\newblock An arbitrary high-order {Discontinuous Galerkin} method for elastic
  waves on unstructured meshes -{V}. local time stepping and p-adaptivity.
\newblock {\em Geophysical Journal International}, 171(2):695--717, 2007.

\bibitem{de2008interior}
Jon{\'a}s~D De~Basabe, Mrinal~K Sen, and Mary~F Wheeler.
\newblock The interior penalty discontinuous {Galerkin} method for elastic wave
  propagation: grid dispersion.
\newblock {\em Geophysical Journal International}, 175(1):83--93, 2008.

\bibitem{etienne2010hp}
V~Etienne, E~Chaljub, J~Virieux, and N~Glinsky.
\newblock An hp-adaptive discontinuous {Galerkin} finite-element method for
  {3-D} elastic wave modelling.
\newblock {\em Geophysical Journal International}, 183(2):941--962, 2010.

\bibitem{wilcox2010high}
Lucas~C Wilcox, Georg Stadler, Carsten Burstedde, and Omar Ghattas.
\newblock A high-order discontinuous {Galerkin} method for wave propagation
  through coupled elastic--acoustic media.
\newblock {\em Journal of Computational Physics}, 229(24):9373--9396, 2010.

\bibitem{klockner2009nodal}
Andreas Kl{\"o}ckner, Tim Warburton, Jeff Bridge, and Jan~S Hesthaven.
\newblock Nodal discontinuous {Galerkin} methods on graphics processors.
\newblock {\em Journal of Computational Physics}, 228(21):7863--7882, 2009.

\bibitem{fuhry2014discontinuous}
Martin Fuhry, Andrew Giuliani, and Lilia Krivodonova.
\newblock Discontinuous {Galerkin} methods on graphics processing units for
  nonlinear hyperbolic conservation laws.
\newblock {\em International Journal for Numerical Methods in Fluids},
  76(12):982--1003, 2014.

\bibitem{godel2010scalability}
Nico G{\"o}del, Nigel Nunn, Tim Warburton, and Markus Clemens.
\newblock Scalability of higher-order discontinuous {G}alerkin {FEM}
  computations for solving electromagnetic wave propagation problems on {GPU}
  clusters.
\newblock {\em Magnetics, IEEE Transactions on}, 46(8):3469--3472, 2010.

\bibitem{modave2015accelerated}
Axel Modave, Amik St-Cyr, Wim~A Mulder, and T.~Warburton.
\newblock Nodal discontinuous {Galerkin} simulations for reverse-time migration
  on {GPU} clusters.
\newblock {\em arXiv preprint arXiv:1506.00907}, 2015.

\bibitem{castro2010seismic}
Crist{\'o}bal~E Castro, Martin K{\"a}ser, and Gilbert~B Brietzke.
\newblock Seismic waves in heterogeneous material: subcell resolution of the
  discontinuous {Galerkin} method.
\newblock {\em Geophysical Journal International}, 182(1):250--264, 2010.

\bibitem{mercerat2015nodal}
E~Diego Mercerat and Nathalie Glinsky.
\newblock A nodal high-order discontinuous {Galerkin} method for elastic wave
  propagation in arbitrary heterogeneous media.
\newblock {\em Geophysical Journal International}, 201(2):1101--1118, 2015.

\bibitem{warburton2010low}
T~Warburton.
\newblock A low storage curvilinear discontinuous {G}alerkin time-domain method
  for electromagnetics.
\newblock In {\em Electromagnetic Theory (EMTS), 2010 URSI International
  Symposium on}, pages 996--999. IEEE, 2010.

\bibitem{warburton2013low}
T.~Warburton.
\newblock A low-storage curvilinear discontinuous {G}alerkin method for wave
  problems.
\newblock {\em SIAM Journal on Scientific Computing}, 35(4):A1987--A2012, 2013.

\bibitem{hesthaven2007nodal}
Jan~S Hesthaven and Tim Warburton.
\newblock {\em Nodal discontinuous Galerkin methods: algorithms, analysis, and
  applications}, volume~54.
\newblock Springer, 2007.

\bibitem{chan2015bbdg}
Jesse Chan and T~Warburton.
\newblock {GPU}-accelerated {Bernstein-Bezier} discontinuous {Galerkin} methods
  for wave problems.
\newblock {\em arXiv preprint arXiv:1512.06025}, 2015.

\bibitem{atkins1998quadrature}
Harold~L Atkins and Chi-Wang Shu.
\newblock Quadrature-free implementation of discontinuous {Galerkin} method for
  hyperbolic equations.
\newblock {\em AIAA journal}, 36(5):775--782, 1998.

\bibitem{brenner2007mathematical}
Susanne Brenner and Ridgway Scott.
\newblock {\em The mathematical theory of finite element methods}, volume~15.
\newblock Springer Science \& Business Media, 2007.

\bibitem{burenkov1998sobolev}
Victor~I Burenkov.
\newblock {\em Sobolev spaces on domains}.
\newblock Springer, 1998.

\bibitem{sarantopoulos1991bounds}
Yannis Sarantopoulos.
\newblock Bounds on the derivatives of polynomials on {Banach} spaces.
\newblock In {\em Mathematical Proceedings of the Cambridge Philosophical
  Society}, volume 110, pages 307--312. Cambridge Univ Press, 1991.

\bibitem{grote2007interior}
Marcus~J Grote, Anna Schneebeli, and Dominik Sch{\"o}tzau.
\newblock Interior penalty discontinuous {Galerkin} method for {Maxwell's}
  equations: Energy norm error estimates.
\newblock {\em Journal of Computational and Applied Mathematics},
  204(2):375--386, 2007.

\bibitem{dolejvsi2015discontinuous}
V{\'\i}t Dolej{\v{s}}{\'\i} and Miloslav Feistauer.
\newblock {\em {Discontinuous Galerkin Method: Analysis and Applications to
  Compressible Flow}}, volume~48.
\newblock Springer, 2015.

\bibitem{ellis2014locally}
Truman Ellis, Leszek Demkowicz, and Jesse Chan.
\newblock Locally conservative discontinuous {Petrov-Galerkin} finite elements
  for fluid problems.
\newblock {\em Computers \& Mathematics with Applications}, 68(11):1530--1549,
  2014.

\bibitem{leveque2002finite}
Randall~J LeVeque.
\newblock {\em Finite volume methods for hyperbolic problems}, volume~31.
\newblock Cambridge university press, 2002.

\bibitem{adjerid2007higher}
Slimane Adjerid and Tao Lin.
\newblock Higher-order immersed discontinuous galerkin methods.
\newblock {\em International Journal of Information and Systems Sciences},
  3(4):555--568, 2007.

\bibitem{xiao2010quadrature}
H~Xiao and Zydrunas Gimbutas.
\newblock A numerical algorithm for the construction of efficient quadrature
  rules in two and higher dimensions.
\newblock {\em Comput. Math. Appl.}, 59:663--676, 2010.

\bibitem{wang2009discontinuous}
Xin Wang.
\newblock {\em {Discontinuous Galerkin} time domain methods for acoustics and
  comparison with finite difference time domain methods}.
\newblock PhD thesis, Rice University, 2009.

\end{thebibliography}

\end{document}